 \theoremstyle{plain}
 \newtheorem{theorem}{Theorem}
 \newtheorem{corollary}[theorem]{Corollary}
 \newtheorem{lemma}[theorem]{Lemma}
 \theoremstyle{remark}
 \newtheorem{remark}{Remark}
 \numberwithin {equation}{section}
\begin{document}
\title[Extension Theorems]{Extension Theorems for spheres in the finite field setting}
\author{Alex Iosevich and Doowon Koh}

\address{Mathematics Department\\
202 Mathematical Sciences Bldg\\
University of Missouri\\
Columbia, MO 65211 USA}
\email{iosevich@math.missouri.edu}
\address{Mathematics Department\\
202 Mathematical Sciences Bldg\\
University of Missouri\\
Columbia, MO 65211 USA}

\email{koh@math.missouri.edu}

\thanks{This work was partially supported by the NSF Grant DMS04-56306. }


\begin{abstract}
In this paper we study the boundedness of extension operators associated with spheres in vector spaces over finite fields.
In even dimensions, we estimate the number of  incidences  between spheres and points in the translated set from a subset 
of spheres. As a result, we improve the Tomas-Stein exponents, previous results by the authors in \cite{IK07}. 
The analytic approach and the explicit formula for  Fourier transform of the characteristic function 
on spheres play an important role to get  good bounds for exponential sums. 
\end{abstract}

\maketitle

 
\newcommand\sfrac[2]{{#1/#2}}
\newcommand\cont{\operatorname{cont}}
\newcommand\diff{\operatorname{diff}}

 
\section{Introduction}
In the Euclidean setting, extension theorems (or restriction theorems) address the problem of finding  
the exponents $p$ and $r$ such that the estimate
\[\| \widehat{fd\sigma} \|_{L^r({\mathbb R}^d)} \leq C_{p,r} \|f\|_{L^p(S, d\sigma)}
\quad \mbox{ for all } f \in L^p(S, d\sigma) \]
holds, where $S$ is a hypersurface of  ${\mathbb R}^d$ and $d\sigma$ is a surface measure on $S$. 
Since this problem was introduced by E. M. Stein in 1967, it has been extensively studied. 
See, for example, \cite{Fe70},\cite{ Zy74},\cite{Str77},\cite{ Ste93},\cite{ Gr03}. A comprehensive survey of this problem is given in \cite{Ta03}.
On the other hand, Mockenhaupt and Tao (\cite{MT04}) recently studied extension theorems 
in the finite field setting for various algebraic varieties $S$. Their work was mostly restricted 
to cones and paraboloids in vector spaces over finite fields and they point out that the Fourier transform of surface measure on spheres is not as easy to compute in the finite field case. 
The authors of this paper obtained in \cite{IK07} the sharp decay of Fourier transform of  non-degenerate quadratic surfaces and used it to obtain the Tomas-Stein exponents for the corresponding extension problems. Here and throughout, the Tomas-Stein exponents are pairs $(p,r)$ such that the following two inequalities hold:
\[ r\geq \frac{2d+2}{d-1} \quad \mbox{and} \quad r\geq \frac{p(d+1)}{(p-1)(d-1)} ,\]
(see Figure 1). 
However this result seems to be far from the best one we can expect. The purpose of this paper is to significantly improve the Tomas-Stein exponents 
in the specific case when the non-degenerate quadratic surfaces under consideration are spheres in even dimensional vector spaces over finite fields. We begin by recalling some notation and Fourier analytic machinery in the finite field setting. 
We denote by $ {\mathbb F}_q^d$  a d-dimensional vector space over the finite field, ${\mathbb F}_q $, with $q$ elements. 
We assume that the characteristic of ${\mathbb F}_q $ is greater than two. In other words, $q$ is a power of an odd prime. 
For each $j \in {\mathbb F}_q^* = {\mathbb F}_q \setminus \{0\},$ we define the sphere $S_j$ in ${\mathbb F}_q^d$ by the relation
\begin{equation}\label{defofsphere} S_j = \{ x \in {\mathbb F}_q^d : \| x\|_2 =j \}, \end{equation}
where the notation  $\| .\|_2$ is defined by the relation $ \|x\|_2 = x_1^2 + \ldots + x_d^2$. 
In the sense of analysis, the $\| .\|_2$ is not a norm since it is not real-valued,
but it can be a norm in the sense of algebra (see, for example,  \cite{Li}, p.356 ). 

We fix $\chi : {\mathbb F}_q \rightarrow  \mathbb C$ to be a non-trivial additive character of ${\mathbb F}_q$.
If $q$ is a prime, we may choose $\chi(s) = e^{2\pi i s/ q}$ and the exact choice of the non-trivial character is independent 
of the results in this paper. Given a complex-valued function $f$ on ${\mathbb F}_q^d$, $d \geq 1,$ we define the Fourier transform of $f$ by the formula
\[ \widehat{f}(m) = q^{-d} \sum_{x \in {\mathbb F}_q^d} \chi(-x\cdot m) f(x) \] where 
$x\cdot m$ is the usual dot product of $x$ and $m$. Similarly, we define the Fourier transform of the measure $f d\sigma$ by the relation
\[ \widehat{fd\sigma}(m) = \frac{1}{|S|} \sum_{x \in S} \chi(-x\cdot m) f(x) \]
where $|S|$ denotes the number of elements in  an algebraic variety $S$ in ${\mathbb F}_q^d$, and  $d\sigma$ denotes normalized surface measure on $S$.
Using the orthogonality relations for non-trivial characters, we obtain the Fourier inversion theorem, that is,
\[ f(x)=\sum_{m\in {\mathbb F}_q^d} \chi(x\cdot m) \widehat{f}(m).\]
Given complex-valued functions $f,g$ on ${\mathbb F}_q^d$, the Plancherel theorem is given by 
\[ \sum_{m \in {\mathbb F}_q^d} \widehat{f}(m) \overline{\widehat{g}}(m) = q^{-d}\sum_{x \in {\mathbb F}_q^d} f(x) \overline{g}(x).\]  
Note that the Plancherel theorem says in this context that
\[ \sum_{m \in {\mathbb F}_q^d} |\widehat{f}(m)|^2 = q^{-d}\sum_{x \in {\mathbb F}_q^d} |f(x)|^2.\]  
Endow the measure on the ``space" variables, dx, with the normalized counting measure given by dividing  
the counting measure by $ q^d$, and the measure on the ``phase" variables, dm, with the usual counting measure. 
Then  we obtain the following definitions : for $1\leq p, r < \infty,$
\[\|f\|_{L^p \left({\mathbb F}_q^d,dx \right)}^p=q^{-d}\sum_{x\in {\mathbb F}_q^d}|f(x)|^p,\]
\[\|\widehat{f}\|_{L^r \left({\mathbb F}_q^d,dm \right)}^r = \sum_{m\in
{\mathbb F}_q^d}|\widehat{f}(m)|^r\]
 and
\[ \|f\|_{L^p \left(S,d\sigma \right)}^p=\frac{1}{|S|}\sum_{x\in S}|f(x)|^p.\] 
Similarly, denote by  $\|f\|_{L^\infty}$ the maximum value of $|f|$.

\subsection{Statement of the main result}
Given an algebraic variety $S$ in ${\mathbb F}_q^d$ and the surface measure on $S$ denoted by $d\sigma$, 
we define $R^*(p \rightarrow r)$ as the best constant such that the extension estimate 
\begin{equation} \label{defExtension}\|\widehat{fd\sigma}\|_{L^r ({\mathbb F}_q^d,dm)}\leq R^*(p\rightarrow r)
\|f\|_{L^p (S,d\sigma)}\end{equation}
holds  for all functions $f$ on $S$. The constant $R^*(p \rightarrow r)$ may depend on the underlying field ${\mathbb F}_q.$
However, the extension theorem asks us to determine the exponents $p$ and $r$ such that $R^*(p \rightarrow r) \lesssim 1$ where 
the constant in the inequality is independent of the size, ``$q$", of the underlying field ${\mathbb F}_q.$ 
Recall that  $X \lesssim Y$ denotes the estimate
$X \leq CY$ where the constant $C$ is independent of $q$. $X\approx Y$ means that
both $X \lesssim Y$ and $Y \lesssim X$ hold. We also recall that $X \lessapprox Y$ is used 
if $X \leq C_\varepsilon q^\varepsilon Y$ for all $\varepsilon >0$ , where 
$ C_\varepsilon$ is independent of $q$.

In two dimensions, the extension theorems for the parabolas and the circles in ${\mathbb F}_q^d$ 
were completely solved by the authors in \cite{MT04} and the authors in \cite{IK07} respectively. 
In higher dimensions, $d\geq 3$, Mockenhaupt  and Tao (\cite{MT04}) also obtained the Tomas-Stein exponents by showing that 
$R^*(2 \rightarrow r) \lesssim 1$ whenever $r \geq (2d+2)/(d-1)$ if $S=\{(x, x\cdot x) \in {\mathbb F}_q^d : x \in {\mathbb F}_q^{d-1}\}$, 
an analog of the Euclidean paraboloid. In particular, when $d=3,$ and $-1$ is not a square in ${\mathbb F}_q$, 
they improved the $R^*(2 \rightarrow 4) $ result from the Tomas-Stein exponents by showing that 
$R^*(8/5 \rightarrow 4) \lessapprox 1$. The main idea for the improvement was the incidence theorem between lines and points 
in ${\mathbb F}_q^2$. However, it could be difficult to improve the Tomas-Stein exponents in higher dimensions, in part, 
because it is not easy to get good incidence theorems in higher dimensions. 
In this paper, we discuss this issue by studying the extension theorem for spheres in vector spaces over finite fields. 
In higher even dimensions, $d\geq 4$, we give the ``$p$" index improvement of the Tomas-Stein exponents on which $R^*(p \rightarrow r) \lesssim 1.$ 
Moreover, we show that in higher odd dimensions, $d\geq 3,$ it is impossible to improve the extension theorems
by the Tomas-Stein exponents if $r\geq (2d+2)/(d-1)$ and $-1$ is a square number in the underlying field ${\mathbb F}_q.$ 

Our main result is the following. 
\begin{theorem} \label{mainThm} Let $S_j$ be a sphere in ${\mathbb F}_q^d$ defined as in (\ref{defofsphere}). 
Suppose $d\geq 4$ is even and $p\geq  \frac{12d-8}{9d-12}.$ Then we have
\[R^*(p \rightarrow 4) \lessapprox 1\] \end{theorem}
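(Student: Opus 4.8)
The plan is to reduce the inequality to an upper bound on the additive energy of arbitrary subsets of $S_j$, and then to prove that bound by playing off elementary estimates against a Fourier-analytic incidence count built on the explicit Gauss-sum evaluation of $\widehat{1_{S_j}}$.

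\emph{Reduction.} Set $p_0=\frac{12d-8}{9d-12}$. Since $\|1_E\|_{L^{p}(S_j,d\sigma)}$ increases with $p$, it suffices to prove $\|\widehat{1_E d\sigma}\|_{L^4({\mathbb F}_q^d,dm)}\lessapprox(|E|/|S_j|)^{1/p_0}$ for every $E\subseteq S_j$; reducing to such indicators is legitimate because the values of a nontrivial additive character lie on the unit circle, so a general $f$ splits into $O(\log q)$ pieces on which $|f|$ is essentially constant on its support (standard dyadic pigeonholing). Expanding the fourth power and using orthogonality of the characters in the phase variable gives
\[
\|\widehat{1_E d\sigma}\|_{L^4({\mathbb F}_q^d,dm)}^{4}=\frac{q^{d}}{|S_j|^{4}}\,Q(E),\qquad Q(E):=\#\{(x_1,x_2,x_3,x_4)\in E^{4}:x_1+x_2=x_3+x_4\}.
\]
As $|S_j|\approx q^{d-1}$ and $4/p_0=\frac{9d-12}{3d-2}$, the theorem is equivalent to the energy estimate $Q(E)\lessapprox q^{(3d-4)/(3d-2)}\,|E|^{(9d-12)/(3d-2)}$ for all $E\subseteq S_j$.

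\emph{Geometry and the role of $\widehat{1_{S_j}}$.} Writing $\Lambda_E(z)=\#\{(x,y)\in E^{2}:x+y=z\}$, so $Q(E)=\sum_{z}\Lambda_E(z)^{2}$, one notes that if $x,y\in S_j$ with $x+y=z\neq 0$ then $\|x\|_2=\|z-x\|_2=j$ forces $x$ onto the chord hyperplane $H_z=\{u:u\cdot z=\|z\|_2/2\}$; thus $\Lambda_E(z)\le|S_j\cap H_z|$, and $Q(E)$ is governed by the incidences between the points of $E$ and the translated sets $z-E$ (which live on the translated spheres $z+S_j$, since $-S_j=S_j$). For $d$ even the Gauss-sum power $\mathfrak{g}^{d}=(\eta(-1)q)^{d/2}$ is a clean power of $q$, giving $\widehat{1_{S_j}}(m)=\eta(-1)^{d/2}q^{-(d+2)/2}K(-j,-\|m\|_2/4)$ for $m\neq 0$ with $\|m\|_2\neq 0$ (a Kloosterman sum, $|K|\le 2\sqrt q$) and an analogous clean formula in the isotropic case, so that $\Lambda_{S_j}(z)=q^{d-2}+O(q^{(d-2)/2})$ for all $z\neq 0$. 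Two trivial consequences are $Q(E)\le|E|^{3}$ and $Q(E)\lesssim q^{d-2}|E|^{2}$: the first already yields the target when $|E|\lesssim q^{(3d-4)/6}$, the second when $|E|\approx q^{d-1}$, but they leave the intermediate range uncovered.

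\emph{Main estimate and the obstacle.} For intermediate $|E|$ I would dyadically decompose the $z$'s by the size of $\Lambda_E(z)$ and control the rich level sets through the Fourier identity $Q(E)=q^{-d}|E|^{4}+q^{3d}\sum_{m\neq 0}|\widehat{1_E}(m)|^{4}$ together with $\widehat{1_E}=\widehat{1_E}\ast\widehat{1_{S_j}}$ (true since $1_E=1_E\cdot 1_{S_j}$) and the explicit formula above; this transfers the nonzero-frequency part onto averages of $\widehat{1_E}$ against Kloosterman weights, where Weil's bound supplies genuine square-root cancellation beyond a crude $\ell^1$--$\ell^\infty$ bound. The result is an estimate of the shape $Q(E)\lessapprox q^{-d}|E|^{4}+(\text{error})$ whose error, in combination with $Q(E)\le|E|^{3}$, is exactly matched to the target, and the crossover of these bounds fixes the exponent $p_0=\frac{12d-8}{9d-12}$. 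Evenness of $d$ enters twice: it makes $\widehat{1_{S_j}}$ collapse to a single Kloosterman sum, and it controls the Witt index of $\|\cdot\|_2$ so that the extremal configurations --- cosets $v+W$ of totally isotropic $W\subseteq S_j$ --- satisfy $\dim W\le d/2-1$ and hence obey the target with room to spare. The hard part will be this last step: keeping the error contributions from the degenerate frequencies (isotropic $m$, and the fibres $\|m\|_2=\mathrm{const}$ where the Kloosterman weight barely oscillates) below the target over the whole intermediate range of $|E|$.
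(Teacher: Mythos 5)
Your reduction is sound and matches the paper's: dyadic pigeonholing to characteristic functions, expansion of the $L^4$ norm, and the observation that the theorem is equivalent to the additive-energy bound $\Lambda_4(E)\lessapprox q^{(3d-4)/(3d-2)}|E|^{(9d-12)/(3d-2)}$ for $E\subseteq S_j$; your two trivial bounds $|E|^3$ and $q^{d-2}|E|^2$ and the identification of where they suffice are also correct. But the theorem lives entirely in the step you defer. The paper's Theorem \ref{Long} asserts the precise intermediate estimate
\[
\Lambda_4(E)\lesssim q^{-1}|E|^3+q^{\frac{d-2}{4}}|E|^{\frac52}+q^{\frac{3d-4}{4}}|E|^{\frac32},
\]
and its proof is not a routine application of Weil's bound to the identity $\Lambda_4(E)=q^{-d}|E|^4+q^{3d}\sum_{m\neq 0}|\widehat{1_E}(m)|^4$. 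You never derive the "(error)" term, never state what bound the crossover is supposed to produce, and explicitly concede that controlling the degenerate frequencies is "the hard part." That hard part is the content of the paper: after a double Cauchy--Schwarz in the physical variables, the quadratic phase in the auxiliary variables $s,s'$ degenerates exactly on configurations with $x\cdot z=j$ (and $x\cdot z'=j$, $z\cdot z'=j$), and these cannot be handled by square-root cancellation at all. The paper disposes of them with a separate incidence result (Theorem \ref{Best}): for $d\geq 4$ even and $E\subseteq S_j$,
\[
\#\{(x,y)\in E^2:\;x\cdot y=j\}\;\lesssim\; q^{-1}|E|^2+q^{\frac{d-2}{2}}|E|,
\]
itself proved by another full round of Cauchy--Schwarz, the explicit formula (\ref{even}), Gauss sums, and Sali\'e's bound, plus a geometric Corollary \ref{Good} counting collinear triples in $E$. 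Nothing in your sketch supplies a substitute for this, and without it the "fibres where the Kloosterman weight barely oscillates" contribute a term you cannot bound below the target.

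A secondary concern: the route you propose (working on the frequency side via $\widehat{1_E}=\widehat{1_E}\ast\widehat{1_{S_j}}$) is circular as stated, since it re-expresses $\widehat{1_E}$ in terms of itself; the paper instead bounds $\Lambda_4(E)$ by the count of triples $(x,y,z)\in E^3$ with $\|x+y-z\|_2=j$ and keeps the analysis on the physical side, which is what makes the degenerate dot-product configurations isolable. Also, your claim that the extremal flat pieces of $S_j$ have dimension at most $d/2-1$ (Corollary \ref{Second} in the paper) is a statement about necessary conditions and sharpness; it plays no role in the sufficiency proof and does not substitute for the missing estimate. As written, the proposal is a plausible program, not a proof.
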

\begin{remark} The authors in \cite{IK07} obtained the Tomas-Stein exponents by studying the extension theorems 
for non-degenerate quadratic surfaces in ${\mathbb F}_q^d$. By means of a nonsingular linear substitution, 
we may identify the non-degenerate quadratic surface in ${\mathbb F}_q^d$ with the set of the form
$S_a = \{x\in {\mathbb F}_q^d : a_1x_1^2 + \ldots + a_d x_d^2 =j\}$ where all $a_k  , k=1, \ldots d,$ are not zero in ${\mathbb F}_q$. 
Using the same argument as in the proof of  Theorem \ref{mainThm}, we can also obtain the same result as  Theorem \ref{mainThm}  
in the case when the sphere is replaced by the set $S_a$ above. Thus Theorem \ref{mainThm} can be extended to the extension theorems 
for non-degenerate quadratic surfaces in ${\mathbb F}_q^d$.\end{remark} 
Let us visualize the results of Theorem \ref{mainThm} (see Figure \ref{Figure1} below). 
From H\"older's inequality and the nesting properties of $L^p$-norms, we see that
\begin{equation}\label{smallgood}R^*(p_1\rightarrow r) \le R^*(p_2\rightarrow r) \quad \mbox{for}\quad \ p_1\ge p_2 \end{equation}
and
\[R^*(p\rightarrow r_1)\le R^*(p\rightarrow r_2) \quad \mbox{for}\quad r_1\ge r_2. \] 
In particular, (\ref{smallgood}) implies that our result $R^*( \frac{12d-8}{9d-12}\rightarrow 4) \lessapprox 1$ in Theorem \ref{mainThm} 
is much better than the result, $R^*( \frac{4d-4}{3d-5}\rightarrow 4)\lesssim 1$,  obtained by the Tomas-Stein exponents,  
because the number $ \frac{12d-8}{9d-12}$ is less than $\frac{4d-4}{3d-5}.$
In addition, note that $ R^*(1 \rightarrow \infty) \lesssim 1$ which is the trivial estimate.
Using these facts and interpolation theorems, we conclude that Theorem \ref{mainThm} implies that the bound 
$R^*(p \rightarrow r) \lesssim 1$ holds whenever the pair $(1/p, 1/r)$ is contained in the pentagon  in  Figure \ref{Figure1}.

\begin{figure}[h]
\centering\leavevmode\epsfysize=4.5cm \epsfbox{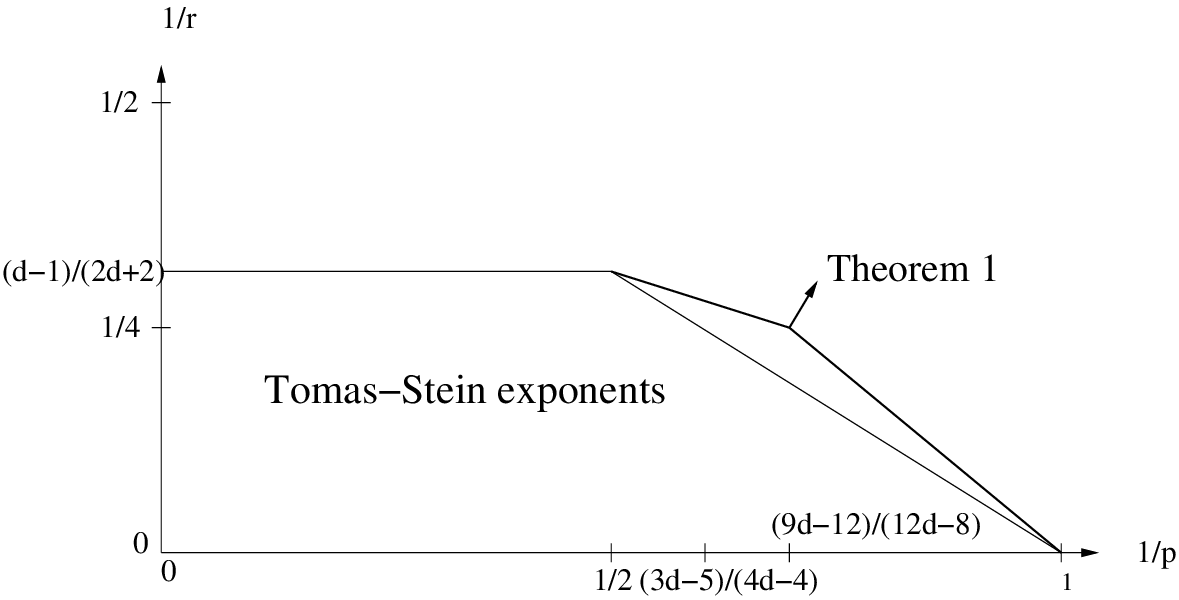}
\caption{\label{Figure1} Tomas-Stein exponents and the improved exponents by Theorem \ref{mainThm}. }
\end{figure}

\subsection{Outline of this paper}
This paper will be constructed as follows.
In Section 2, we shall study the necessary conditions for $R^*(p \rightarrow r) \lesssim 1$ related to spheres. 
As a result, we shall see that, in odd dimensions, $d\geq 3$, one can not expect the ``$p$" index improvement of Tomas-Stein exponents 
on which $R^*(p \rightarrow r) \lesssim 1$ for each $r\geq (2d+2)/(d-1)$ without any restrictions.
In Section 3, we introduce some theorems which can be obtained by applying results from estimates of classical exponential sums. 
The theorems shall play an important role to deal with the key estimates for the proof of Theorem \ref{mainThm}. 
In Section 4, We sketch the proof of Theorem \ref{mainThm}, our main theorem.  
In Section 5, we shall establish the estimates related to dot products determined by a subset of sphere $S_j$, 
which can be used to obtain the key estimate for the proof 
of Theorem \ref{mainThm}. In the last section, we complete the proof of Theorem \ref{mainThm} by giving the proof of Theorem \ref{Long}.

\section{Necessary conditions for $R^*(p\rightarrow r) \lesssim 1$}
In this section, we shall investigate the necessary conditions for the boundedness of the extension operators for spheres $S_j$ defined as in (\ref{defofsphere}). 
Let $S$ be an algebraic variety in $ \mathbb{F}_q^d.$ We assume that $|S| \approx q^\alpha$ for some $ 0<\alpha < d.$
Mockenhaupt and Tao (\cite{MT04}) showed that the necessary conditions for the boundedness of $R^*(p\rightarrow r) $ are given by 
\begin{equation} \label{Necessary1}
r\geq \frac{2d}{\alpha} \quad \mbox{and} \quad r\geq \frac{dp}{\alpha (p-1)}. \end{equation}
However, if the algebraic variety $S$ contains an affine subspace $H \subset \mathbb{F}_q^d$ of dimension $k \,( |H|= q^k)$, then 
we can improve the necessary conditions in (\ref{Necessary1}) by testing 
(\ref{defExtension}) with the characteristic function $H(x)$ on the affine subspace $H$. In fact, if $S$ contains an affine subspace $H\subset \mathbb{F}_q^d$
with $|H|=q^k$, the necessary conditions for $ R^*(p\rightarrow r) \lesssim 1$ are given by 
\begin{equation}\label{Necessary2}  
r\geq \frac{2d}{\alpha} \quad \mbox{and} \quad r\geq\frac{p(d-k)}{(p-1)(\alpha -k)}.\end{equation}
See (\cite{MT04}, pp. 41-42) for the detailed proofs of above necessary conditions.
In the case when the algebraic variety $S$ is the sphere $S_j$, we shall find the necessary conditions for boundedness of $R^*(p\rightarrow r).$ we need the following theorem.

\begin{theorem}\label{subspaceH} Let $H \subset \mathbb{F}_q^d$ be an affine subspace of dimension $k$. Then we have
\[ |H\cap S_j| \lesssim q^{k-1} + q^{\frac{d-1}{2}}.\] \end{theorem}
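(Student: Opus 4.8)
The plan is to estimate $|H \cap S_j|$ via Fourier analysis, writing the intersection count as a sum over the dual of the affine subspace $H$ and isolating the explicit Fourier transform of the sphere's surface measure. First I would write $H = v + V$ where $V$ is a $k$-dimensional linear subspace of ${\mathbb F}_q^d$. By orthogonality, $|H \cap S_j|$ equals
\[
|H \cap S_j| = \sum_{x \in S_j} H(x) = \sum_{x \in S_j} q^{k-d} \sum_{m \in V^{\perp}} \chi(m \cdot (x - v)),
\]
where $V^{\perp}$ is the annihilator of $V$, which has $q^{d-k}$ elements. Swapping the order of summation, this becomes $q^{k-d} |S_j| \sum_{m \in V^{\perp}} \chi(-m\cdot v)\, \widehat{d\sigma_j}(m) \cdot q^{?}$ after normalizing correctly; the cleanest bookkeeping is to note $\sum_{x \in S_j} \chi(m\cdot x) = q^d (S_j)^{\widehat{\phantom{x}}}(-m)$, so
\[
|H \cap S_j| = q^{k-d} \sum_{m \in V^{\perp}} \chi(-m \cdot v) \,\widehat{S_j}(-m)\, q^{d} \cdot q^{-d} = q^{k} \sum_{m \in V^{\perp}} \chi(-m\cdot v)\, \widehat{S_j}(-m),
\]
treating $S_j$ as its own characteristic function. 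The $m = 0$ term contributes $q^k \widehat{S_j}(0) = q^k |S_j| q^{-d} \approx q^{k-1}$ since $|S_j| \approx q^{d-1}$, which already accounts for the first term in the claimed bound.

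Next I would invoke the explicit formula for $\widehat{S_j}(m)$ — this is precisely the "explicit formula for the Fourier transform of the characteristic function on spheres" advertised in the abstract and established by the authors in \cite{IK07}. The standard shape of that formula is
\[
\widehat{S_j}(m) = q^{-1}\delta_0(m) + q^{-d}\,(\text{Gauss sum factor})\cdot \eta(\text{something})\cdot \chi\!\left(\tfrac{\|m\|_2}{4j}\right) + (\text{lower order}),
\]
and the key quantitative point is the uniform decay bound $|\widehat{S_j}(m)| \lesssim q^{-(d+1)/2}$ for all $m \neq 0$ (this follows from Gauss sum estimates, $|$Gauss sum$| = q^{1/2}$). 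Plugging this into the sum over the $q^{d-k} - 1$ nonzero elements of $V^{\perp}$ gives the crude bound $q^k \cdot q^{d-k} \cdot q^{-(d+1)/2} = q^{(d-1)/2}$, which is the second term. Adding the two contributions yields $|H \cap S_j| \lesssim q^{k-1} + q^{(d-1)/2}$, as desired.

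The main obstacle is that the crude triangle-inequality bound on the sum over $V^{\perp} \setminus \{0\}$ uses $|\widehat{S_j}(m)| \lesssim q^{-(d+1)/2}$ uniformly, which is lossless only because there is no cancellation to exploit when we take absolute values — but one must be careful that the explicit formula for $\widehat{S_j}(m)$ does not contain a term supported on a large set (like $\{m : \|m\|_2 = 0\}$) with size exceeding $q^{-(d+1)/2}$; in some formulations there is a secondary term of size $q^{-d/2}$ supported only where $\|m\|_2 = 0$, and one must check that its contribution, summed over $V^{\perp} \cap \{\|m\|_2 = 0\}$, is still absorbed into $q^{(d-1)/2}$ (indeed $q^k \cdot q^{d-k-1} \cdot q^{-d/2} = q^{(d-2)/2} \le q^{(d-1)/2}$). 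So the real work is just citing the precise statement of the Fourier formula from \cite{IK07} and verifying the term-by-term bookkeeping; no genuinely new estimate is needed beyond the $q^{1/2}$ bound on Gauss sums. I would also double-check the edge case $k = d$ (then $H = {\mathbb F}_q^d$ and the bound reads $|S_j| \lesssim q^{d-1}$, consistent) and small $k$, where the $q^{(d-1)/2}$ term dominates and the bound is again trivially consistent with $|H| = q^k$.
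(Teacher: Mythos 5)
Your proposal is correct and follows essentially the same route as the paper: expand the indicator of $H$ over the dual coset (equivalently, apply Plancherel to $\sum_x H(x)S_j(x)$), extract the main term $q^k\widehat{S_j}(0)\approx q^{k-1}$, and bound the remaining $q^{d-k}$ nonzero frequencies by the uniform decay $|\widehat{S_j}(m)|\lesssim q^{-(d+1)/2}$ to get $q^{(d-1)/2}$. The caveat you raise about a larger secondary term on $\{\|m\|_2=0\}$ does not arise here, since the paper's Lemma 4 together with the Sali\'e bound gives the stated decay uniformly for all $m\neq 0$.
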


\begin{proof} Using the Plancherel theorem, we see that
\begin{align*} |H \cap S_j| =& \sum_{x\in \mathbb{F}_q^d} H(x) S_j(x) = q^d \sum_{m\in \mathbb{F}_q^d} \widehat{H}(m) \widehat{S_j}(m)\\
                            =& q^d \widehat{H}(0,\ldots,0) \widehat{S_j}(0,\ldots,0) + q^d \sum_{m\ne (0,\dots,0)} \widehat{H}(m) \widehat{S_j}(m)\\
                             =& I +II.\end{align*}
Since $ |H|= q^k$ and $ |S_j| \approx q^{d-1}$, we obtain that
\begin{equation}\label{First} I= q^d \frac{|H|}{q^d} \frac{|S_j|}{q^d} \approx q^{k-1}. \end{equation}
On the other hand, we observe that
\begin{align*} |II|\leq &q^d \max_{\theta\neq (0,\ldots,0)} |\widehat{S_j}(\theta)| \sum_{m\in \mathbb{F}_q^d} |q^{-d} \sum_{x\in H} \chi(-x\cdot m) |\\
                    \lesssim & q^d q^{-\frac{d+1}{2}} q^{-d} q^{d-k}|H| = q^{\frac{d-1}{2}},\end{align*}
where we used the facts that $H$ is an affine subspace of dimension $k$ and $|\widehat{S_j}(m)| \lesssim q^{-\frac{d+1}{2}} $ if $m \neq (0,\ldots, 0)$ 
(see Remark \ref{boundforsphere}). Combining this with (\ref{First}), the proof immediately follows.
 \end{proof}
From Theorem \ref{subspaceH}, we obtain the following corollary.
\begin{corollary}\label{Second} Let $H \subset \mathbb{F}_q^d$ be an affine subspace of dimension $k$. Moreover, we assume that 
$H \subset S_j$. Then we have 
\[ |H| \lesssim q^{\frac{d-1}{2}}.\]
In addition, if $d \geq 2$, the dimension of $ \mathbb{F}_q^d$, is even, then we have
\[ |H| \lesssim q^{\frac{d-2}{2}}.\]\end{corollary}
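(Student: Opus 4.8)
The plan is to apply Theorem \ref{subspaceH} in the special case where the affine subspace $H$ of dimension $k$ is entirely contained in the sphere $S_j$. In that situation $H \cap S_j = H$, so the left-hand side of the estimate in Theorem \ref{subspaceH} is simply $|H| = q^k$, and we obtain
\[ q^k \lesssim q^{k-1} + q^{\frac{d-1}{2}}. \]
Since $q^{k-1} = q^{-1} \cdot q^k$ is a vanishingly small fraction of $q^k$ as $q \to \infty$, it can be absorbed: for $q$ large enough the term $q^{k-1}$ is at most $\tfrac12 q^k$, hence $\tfrac12 q^k \lesssim q^{\frac{d-1}{2}}$, which gives $|H| = q^k \lesssim q^{\frac{d-1}{2}}$. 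This is the first claimed bound.

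For the second claim, suppose in addition that $d$ is even. Then $\frac{d-1}{2}$ is not an integer, while $k$ is an integer, so the inequality $q^k \lesssim q^{\frac{d-1}{2}}$ — once we track that the implied constant is independent of $q$ — forces $k \le \lfloor \frac{d-1}{2} \rfloor = \frac{d-2}{2}$. Indeed, if we had $k \ge \frac{d}{2} = \frac{d-1}{2} + \frac12$, then $q^k / q^{\frac{d-1}{2}} \ge q^{1/2} \to \infty$, contradicting the uniform bound. Hence $k \le \frac{d-2}{2}$, i.e. $|H| = q^k \le q^{\frac{d-2}{2}}$, which is the sharpened estimate in even dimensions.

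The only subtlety — and the step deserving the most care — is the passage from an estimate of the form $q^k \lesssim q^{\frac{d-1}{2}}$ with a $q$-independent constant to an honest inequality on the exponents $k \le \frac{d-1}{2}$ (respectively $k \le \frac{d-2}{2}$ when $d$ is even). This is routine but relies on the convention, fixed in the introduction, that all implied constants in $\lesssim$ are independent of $q$; letting $q$ grow then rules out the larger value of $k$. No further input is needed: both parts follow from Theorem \ref{subspaceH} together with this observation about exponents of growing powers of $q$.
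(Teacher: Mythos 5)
Your proposal is correct and follows the same route as the paper: the paper's proof simply says the first bound ``clearly follows'' from Theorem \ref{subspaceH} (via $H\cap S_j=H$ and absorbing the $q^{k-1}$ term) and that the second follows from the integrality of the dimension $k$, which is exactly the exponent-comparison argument you spell out. Your version just supplies the details the paper omits, including the correct caveat that the implied constants must be $q$-independent for the exponent comparison to be meaningful.
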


\begin{proof} The first part of Corollary \ref{Second} clearly follows from Theorem \ref{subspaceH} and the second part of Corollary \ref{Second}
 follows from the fact that the dimensions of the affine subspaces are non-negative integers.
\end{proof}

If -1 is a square in $\mathbb{F}_q$ and $d$ is odd, there exists a $\frac{d-1}{2}$-dimensional affine subspace $H$ contained in the sphere $S_j$ in $\mathbb{F}_q^d$
(see, e.g., Example 4.4 in \cite{IR06}). Thus if $d$ is odd, then the necessary conditions in (\ref{Necessary2}) take the form
\[ r\geq \frac{2d}{d-1} \quad \mbox{and} \quad r\geq \frac{p(d+1)}{(p-1)(d-1)},\]
because $|S_j| \approx q^{d-1}$ and $|H|= q^{\frac{d-1}{2}}.$ Recall that the Tomas-Stein exponents with $R^*(p\rightarrow r)\lesssim 1$ take the form
\[ r\geq \frac{2d+2}{d-1} \quad \mbox{and} \quad r\geq \frac{p(d+1)}{(p-1)(d-1)} ,\]
which was proved by the authors in \cite{IK07}. Thus if $d \geq 3$ is odd and $r\geq \frac{2d+2}{d-1}$ then the Tomas-Stein exponents give sharp 
``$p$" values such that $R^*(p\rightarrow r)\lesssim 1.$ For example, if $r=4$, we can not improve $R^*(\frac{4d-4}{3d-5} \rightarrow 4) \lesssim 1,$
the result given by Tomas-Stein exponents (see Figure \ref{Figure2} below).

\begin{figure}[h]
\centering\leavevmode\epsfysize=4.5cm \epsfbox{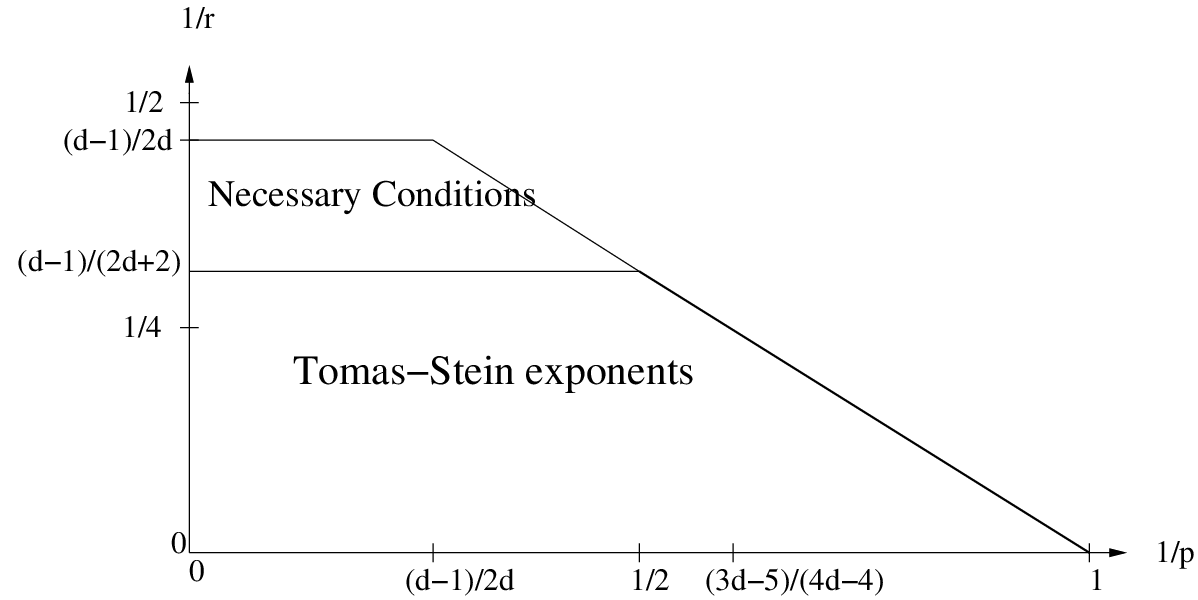}
\caption{\label{Figure2}Necessary conditions for boundedness of $R^*(p\rightarrow r)$ in odd dimensions, $d\geq3.$ }
\end{figure}

However if $d \geq 2 $ is even, we can improve the Tomas-Stein exponents, because the sphere $S_j$ contains at most a $\frac{d-2}{2}$-dimensional affine subspace $H$, 
which is a result from Corollary \ref{Second}. From this and (\ref{Necessary2}), we may conjecture, in even dimensions $d\geq 2,$ that $ R^*(p\rightarrow r) \lesssim 1$
if 
\[ r\geq \frac{2d}{d-1}\quad \mbox{and} \quad r\geq \frac{p(d+2)}{(p-1)d}.\]
Theorem \ref{mainThm} partially supports above conjecture (see Figure \ref{Figure3}).

\begin{figure}[h]
\centering\leavevmode\epsfysize=4.5cm \epsfbox{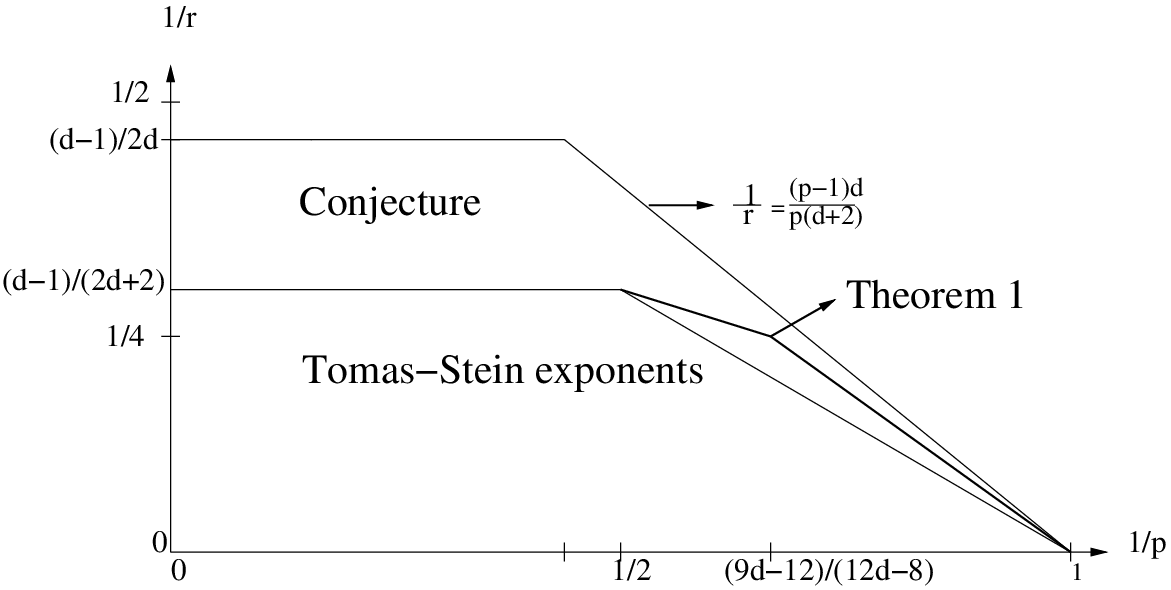}
\caption{\label{Figure3}Conjectured necessary conditions for boundedness of $R^*(p\rightarrow r)$ in even dimensions, $d\geq 4,$ 
and the improved exponents by Theorem \ref{mainThm}. }
\end{figure}

\section{ Results from the classical exponential sums}
In this section, we introduce the consequences driven by using classical bounds for exponential sums, 
such as Gauss sums and generalized Kloosterman sums. 
In the remainder of this paper, we identify sets with their characteristic functions. 
For instance, we write $E$ and $ \widehat{Ed\sigma}$ for  $ \chi_E$ and $ \widehat{\chi_Ed\sigma}$ respectively. 
We also assume that $\chi$ denotes the non-trivial additive character of ${\mathbb F}_q $, 
and we denote by $\eta$ a multiplicative character of ${\mathbb F}_q^* $ of order two,
that is , $\eta(ab) = \eta(a) \eta(b)$ and $\eta^2(a) =1$ for all $ a, b \in {\mathbb F}_q^*$.
Then the estimate for Gauss sums is given by
\[G_a(\chi,\psi)=\sum_{t\in {\mathbb  F}_q^*}\eta(t)\chi(at) \lesssim q^{\frac{1}{2}}, \quad 
\mbox{for} \quad a\in {\mathbb F}_q^*,\] 

the estimate for Kloosterman sums due to Weil (\cite{We48}) is given by
\[ \sum_{t\in {\mathbb F}_q^*}\chi(at+bt^{-1})  \lesssim
q^{\frac{1}{2}} \quad \mbox{for}\quad a,b \in {\mathbb F}_q^*,\]
and the estimate for twisted Kloosterman sums due to Sali\'e (\cite{Sa32}) is given by 
\begin{equation}\label{GKloosterman} \sum_{t\in {\mathbb F}_q^*}\eta(t)\chi(at+bt^{-1}) \lesssim
q^{\frac{1}{2}} \quad \mbox{for} \quad a,b \in {\mathbb F}_q. \end{equation}
Some exponential sums can be expressed in terms of Gauss sums. 
For example, we have the following formula (see \cite{LN97} or \cite{IK04}).
\begin{equation}\label{Gaussformula}\sum_{s\in {\mathbb F}_q}
\chi(ts^2)=\eta(t)G(\eta,\chi) \quad \mbox{for any} \quad t \in {\mathbb F}_q^*,\end{equation}
where $G(\eta,\chi)$ is a Gauss sum given by $G(\eta,\chi)=\sum\limits_{s\in {\mathbb F}_q^*}
\eta(s)\chi(s).$ We now introduce the explicit form of the Fourier transform of spheres in ${\mathbb F}_q^d$,
which can be found in \cite{IR06}, \cite{IK06}, and \cite{HIKR07}. For the reader's convenience we review the formula below.
\begin{lemma}\label{Lem1}
Let $S_j$ be a sphere in ${\mathbb F}_q^d$ defined as in (\ref{defofsphere}).
Then for any $m\in {\mathbb F}_q^d,$ we have
\[ \widehat{S_j}(m) = q^{-1} \delta_0(m) + q^{-d-1}\eta^d(-1) G^d(\eta, \chi) \sum_{r \in {\mathbb F}_q^*} 
\eta^d(r)\chi\Big(jr+ \frac{\|m\|_2}{4r}\Big),\]
where $\delta_0(m)=1$ if $m=(0, \ldots, 0)$ and $\delta_0(m)=0$ otherwise.
\end{lemma}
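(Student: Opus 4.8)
The plan is to compute $\widehat{S_j}(m)$ directly from the definition of the Fourier transform, pass from the indicator of the sphere to an exponential sum over the defining equation using character orthogonality, and then evaluate the resulting Gauss-type sums.

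First I would write $S_j(x) = q^{-1}\sum_{r\in{\mathbb F}_q}\chi\big(r(\|x\|_2 - j)\big)$, which is the standard device expressing the characteristic function of the level set $\{\|x\|_2 = j\}$ via orthogonality of additive characters. Plugging this into
\[\widehat{S_j}(m) = \frac{1}{|S_j|}\sum_{x\in{\mathbb F}_q^d}\chi(-x\cdot m) S_j(x)\]
is not quite the right normalization for the stated formula; rather, I would work with the \emph{un-normalized} version, i.e. treat $\widehat{S_j}$ here as $q^{-d}\sum_{x}\chi(-x\cdot m)S_j(x)$ (consistent with the $q^{-d}\delta_0(m)$-type leading terms appearing elsewhere, after absorbing constants), so that
\[\widehat{S_j}(m) = q^{-d-1}\sum_{r\in{\mathbb F}_q}\chi(-jr)\sum_{x\in{\mathbb F}_q^d}\chi\big(r\|x\|_2 - x\cdot m\big).\]
The $r=0$ term contributes $q^{-d-1}\sum_x \chi(-x\cdot m) = q^{-1}\delta_0(m)$, which is precisely the first summand in the claimed formula.

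Next, for each fixed $r\in{\mathbb F}_q^*$, the inner sum over $x$ factors as a product of $d$ one-dimensional sums, $\prod_{i=1}^d \sum_{x_i\in{\mathbb F}_q}\chi(r x_i^2 - m_i x_i)$. Completing the square via $r x_i^2 - m_i x_i = r\big(x_i - \tfrac{m_i}{2r}\big)^2 - \tfrac{m_i^2}{4r}$ (here the hypothesis that the characteristic is odd is used so that $2r$ is invertible), and translating the summation variable, each factor becomes $\chi\big(-\tfrac{m_i^2}{4r}\big)\sum_{s\in{\mathbb F}_q}\chi(rs^2)$. Applying the Gauss-sum identity (\ref{Gaussformula}), $\sum_{s}\chi(rs^2) = \eta(r)G(\eta,\chi)$, the inner $x$-sum equals $\eta^d(r)\,G^d(\eta,\chi)\,\chi\big(-\tfrac{\|m\|_2}{4r}\big)$. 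Summing over $r\in{\mathbb F}_q^*$ and combining with the $r=0$ term gives
\[\widehat{S_j}(m) = q^{-1}\delta_0(m) + q^{-d-1}G^d(\eta,\chi)\sum_{r\in{\mathbb F}_q^*}\eta^d(r)\,\chi\Big(-jr - \frac{\|m\|_2}{4r}\Big).\]
Finally I would reconcile signs with the stated form: replacing $r$ by $-r$ in the sum introduces $\eta^d(-1)$ and flips both $-jr \to jr$ and $-\frac{\|m\|_2}{4r}\to\frac{\|m\|_2}{4r}$, yielding exactly $q^{-d-1}\eta^d(-1)G^d(\eta,\chi)\sum_{r}\eta^d(r)\chi\big(jr + \frac{\|m\|_2}{4r}\big)$.

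The computation is essentially routine once the right tools are lined up; the only genuine care-points are bookkeeping of the normalization constant in the definition of $\widehat{S_j}$ (so that the $q^{-1}\delta_0(m)$ and $q^{-d-1}$ powers come out correctly) and the sign/character substitution at the end. The main conceptual obstacle, such as it is, is simply making sure the Gauss-sum formula (\ref{Gaussformula}) is applied with the correct argument after completing the square, and that the case $m=0$ is handled consistently (when $\|m\|_2 = 0$ the remaining sum $\sum_r \eta^d(r)\chi(jr)$ is itself a Gauss-type sum, but no separate treatment is needed — the formula as stated already covers it). I expect no serious difficulty.
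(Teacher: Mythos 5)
Your proof is correct and follows essentially the same route as the paper: expand the indicator of $S_j$ via orthogonality of additive characters, peel off the $r=0$ term to get $q^{-1}\delta_0(m)$, complete the square in each coordinate, and apply the Gauss-sum identity (\ref{Gaussformula}); your normalization choice is also the right one, since the paper explicitly uses $\widehat{S_j}=\widehat{\chi_{S_j}}$ with the $q^{-d}$ convention. The only cosmetic difference is that the paper starts from $\chi(-r(\|x\|_2-j))$ so the factor $\eta^d(-1)$ appears directly from $\sum_s\chi(-rs^2)=\eta(-r)G(\eta,\chi)$, whereas you obtain it by the substitution $r\mapsto -r$ at the end.
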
 
\begin{proof} Recall that we write $\widehat{S_j}$ for $ \widehat{\chi_{S_j}}.$ From the definition of
Fourier transform and the orthogonality relations for non-trivial characters, we have
\begin{align*} \widehat{S_j}(m) &= q^{-d} \sum_{x \in {\mathbb F}_q^d} \chi(-x\cdot m) q^{-1} 
\sum_{r\in {\mathbb F}_q} \chi(-r(\|x\|_2 -j))\\
&=q^{-1} \delta_0(m) +q^{-d-1}\sum_{r\in {\mathbb F}_q^*}\chi(jr) 
\sum_{x\in {\mathbb F}_q^d} \chi(-r \|x\|_2 -x\cdot m). \end{align*}
Completing the squares and using the formula in (\ref{Gaussformula}), the proof immediately follows.
\end{proof}
\begin{remark} \label{boundforsphere} From Lemma \ref{Lem1}, we see that $\widehat{S_j}(0,\ldots,0) \approx q^{-1}$ and so the size of 
the sphere $ S_j$ is $\approx q^{d-1}.$ In other words, $|S_j| \approx q^{d-1}.$ Moreover, if $m \ne (0,\ldots,0),$ then 
$|\widehat{S_j}(m)| \lesssim q^{-\frac{d+1}{2}}$ (see \cite{IK07}).
If $d$ is even then $\eta^d =1,$ because $\eta$ is a multiplicative character of ${\mathbb F}_q^*$
of order two. Moreover the exact value of $G^d(\eta, \chi)$ is given by the equation 
\[G^d(\eta, \chi) =K q^{\frac{d}{2}} \quad \mbox{for some} \quad K \in \mathbb C ,\]
where $K$ depends on the additive character $\chi$, the size of ${\mathbb F}_q$, and the dimension of ${\mathbb F}_q^d$.
However it is uniformly bounded by $1$. For the exact value of $K$, see \cite{LN97}. Thus for even $d$ and 
for each $m \in {\mathbb F}_q^d$, we have
\begin{equation}\label{even} 
\widehat{S_j}(m) = q^{-1}\delta_0(m) + Kq^{-\frac{d+2}{2}} \sum_{r\in {\mathbb F}_q^*} \chi\Big(jr + \frac{\|m\|_2}{4r}\Big) ,\end{equation}
where the value of $K$ is uniformly bounded by $1$.\end{remark}
\begin{remark}Throughout the paper, the constant $K $ may change from a line to another line,
but it is uniformly bounded by one.\end{remark}

\section{Overview of the proof of Theorem \ref{mainThm}}\label{section3}

In this section, we shall prove Theorem \ref{mainThm}  by assuming that Theorem \ref{Long} (see Section \ref{sectionlong})  holds.
Using the inequality in (\ref{smallgood}) and the usual dyadic pigeonholing argument, it suffices to show that with $p= \frac{12d-8}{9d-12}$,
\begin{equation}\label{Extension} 
\|\widehat{Ed\sigma}\|_{L^4 ({\mathbb F}_q^d,dm)} \lesssim \|E\|_{L^p (S_j,d\sigma)} , \quad \mbox{for all} \quad E \subset S_j.\end{equation}
Expanding both terms in (\ref{Extension}) and using the fact that $|S_j|\approx q^{d-1}$, it is enough to prove that 
\begin{equation}\label{Easyform}
\Lambda_4(E) \lesssim |E|^{\frac{4}{p}} q^{3d-4} q^{\frac{-4d+4}{p}}\quad \mbox{for}\quad p=\frac{12d-8}{9d-12},
\end{equation}
where $\Lambda_4(E) = \sum\limits_{\substack{(x,y,z,k)\in E^4\\:x+y=z+k}} 1$ (see \cite{IK07}). Note that Theorem \ref{Long} implies that 
if $d\geq 4$ is even and $E$ is any subset of the sphere $S_j$, then 

\[\Lambda_4(E)\lesssim \left\{\begin{array}{ll}   q^{-1} |E|^3 \quad &\mbox {if} \quad q^{\frac{d+2}{2}} \lesssim |E| \lesssim q^{d-1} \\
                       q^{\frac{d-2}{4}}|E|^{\frac{5}{2}} \quad &\mbox{if} \quad q^{\frac{d-1}{2}} \lesssim |E| \lesssim  q^{\frac{d+2}{2}}\\
                      q^{\frac{3d-4}{4}}|E|^{\frac{3}{2}} \quad &\mbox{if} \quad q^{\frac{3d-4}{6}} \lesssim |E| \lesssim  q^{\frac{d-1}{2}}\\
                      |E|^3 \quad &\mbox{if} \quad 1 \lesssim |E| \lesssim q^{\frac{3d-4}{6}}. \end{array}\right. \] 

Using these upper bounds of $\Lambda_4(E)$ depending on the size of the subset $E$ of $S_j$, the inequality in (\ref{Easyform}) follows 
by the direct calculation, and the proof of Theorem \ref{mainThm} is complete if we can prove Theorem \ref{Long}. In Section \ref{sectionlong},
we shall prove Theorem \ref{Long}.

\section{Estimating of dot products}
Given a subset $E$ of a sphere $S_j$, we shall estimate the number of pair $(x,z)\in E^2$ 
such that the dot product $x\cdot z$ is exactly the radius of sphere $S_j$. 
Such an estimate enables us to improve the Tomas-Stein exponents at certain points if the dimension, $ d\geq 4,$ of ${\mathbb F}_q^d$ is even.

\begin{theorem}\label{Best} Let $S_j$ be a sphere in ${\mathbb F}_q^d$ with $d\geq 4$ even, defined as in (\ref{defofsphere}).
If $E$ is any subset of the sphere $S_j, ~j\ne 0,$ then we have 
\[ \sum_{(x,y)\in E^2 : x\cdot y=j} 1 \lesssim q^{-1}|E|^2 + q^{\frac{d-2}{2}}|E| .\]
\end{theorem}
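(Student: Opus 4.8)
The plan is a second–moment argument, carried out after isolating the expected main term. For $x\in S_j$ put $H_x=\{y\in\mathbb F_q^d:x\cdot y=j\}$ and $\nu(x)=|E\cap H_x|$, so that the quantity to be bounded is $N:=\sum_{x\in E}\nu(x)$; note that $x\in S_j$ forces $x\cdot x=j$, so $H_x$ is the affine hyperplane $x+x^{\perp}$. Since $\nu(x)$ should typically be about $q^{-1}|E|$, I would write $N-q^{-1}|E|^2=\sum_{x\in E}(\nu(x)-q^{-1}|E|)$ and apply Cauchy--Schwarz, obtaining
\[
\bigl(N-q^{-1}|E|^2\bigr)^2\le |E|\,V,\qquad V:=\sum_{x\in E}\bigl(\nu(x)-q^{-1}|E|\bigr)^2\le \widetilde V:=\sum_{x\in S_j}\bigl(\nu(x)-q^{-1}|E|\bigr)^2 .
\]
Because $q^{(d-4)/2}|E|^2=\bigl((q^{d-2}|E|)(q^{-2}|E|^3)\bigr)^{1/2}\le q^{d-2}|E|+q^{-2}|E|^3$, it suffices to prove the variance estimate $\widetilde V\lesssim q^{d-2}|E|+q^{-2}|E|^3$; then $(|E|\widetilde V)^{1/2}\lesssim q^{(d-2)/2}|E|+q^{-1}|E|^2$, which is the theorem.

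For the variance I would expand $\widetilde V=\sum_{x\in S_j}\nu(x)^2-2q^{-1}|E|\sum_{x\in S_j}\nu(x)+q^{-2}|E|^2|S_j|$ and compute the moments of $\nu$ over $S_j$ essentially exactly, which is where evenness of $d$ enters decisively. First, $\sum_{x\in S_j}\nu(x)=\sum_{y\in E}|S_j\cap H_y|$, and parametrising $H_y=y+y^{\perp}$ identifies $S_j\cap H_y$ with the zero set of $\|\cdot\|_2$ restricted to $y^{\perp}$; since $\mathbb F_q y$ is anisotropic this restriction is a nondegenerate quadratic form in the \emph{odd} number $d-1$ of variables, which has exactly $q^{d-2}$ zeros, so $\sum_{x\in S_j}\nu(x)=q^{d-2}|E|$ with no error term. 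Second, $\sum_{x\in S_j}\nu(x)^2=\sum_{y,z\in E}|S_j\cap H_y\cap H_z|$: the diagonal contributes $q^{d-2}|E|$, the pairs with $z=-y$ contribute $0$ since $H_y\cap H_{-y}=\varnothing$ (as $j\ne -j$ in odd characteristic), and for $y,z$ linearly independent one expands via Lemma~\ref{Lem1} and completes squares to get $|S_j\cap H_y\cap H_z|=q^{d-3}+e(y,z)$, where $|e(y,z)|\le q^{(d-4)/2}$ in general and, crucially, $e(y,z)=0$ when $y\cdot z=\pm j$ — in those degenerate cases $\|sy+tz\|_2$ becomes a perfect square $j(s\mp t)^2$ and the resulting Gauss-type sum vanishes by the orthogonality relations.

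Substituting these together with $|S_j|=q^{d-1}+O(q^{d/2-1})$, all the $q^{d-3}|E|^2$ contributions cancel and one is left with
\[
\widetilde V = q^{d-2}|E|-q^{d-3}|E|-q^{d-3}|E\cap(-E)|+O\!\bigl(q^{(d-4)/2}|E|^2\bigr)\;\lesssim\; q^{d-2}|E|+q^{-2}|E|^3 ,
\]
as required. The main obstacle, I expect, is precisely this point count for $|S_j\cap H_y\cap H_z|$: one must check that the main term is \emph{exactly} $q^{d-3}$ (so that it annihilates the $q^{-2}|E|^2|S_j|$ term), that the correction is no larger than $q^{(d-4)/2}$, and that it \emph{vanishes} on the two degenerate families $y\cdot z=\pm j$, since the number of such pairs is not controlled a priori; merely crude-bounding $|S_j\cap H_y\cap H_z|$ by Theorem~\ref{subspaceH} is too lossy for $|E|$ of intermediate size. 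All three point counts are consequences of the explicit formula for $\widehat{S_j}$ in Lemma~\ref{Lem1} and the Gauss-sum evaluations recalled in Section~3.
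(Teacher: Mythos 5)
Your argument is correct, and at the skeleton level it is the paper's own: since $q^{-1}\sum_{s\in{\mathbb F}_q^*}\sum_{y\in E}\chi(-s(x\cdot y-j))=\nu(x)-q^{-1}|E|$, your $N-q^{-1}|E|^2$ is exactly the paper's error term $I(j)$, your Cauchy--Schwarz in $x$ followed by enlarging $E$ to $S_j$ is the same step, and your target $\widetilde V\lesssim q^{d-2}|E|+q^{-2}|E|^3$ is the paper's bound (\ref{DN}) divided by $|E|$. Where you genuinely differ is in evaluating $\widetilde V$: the paper expands it into $\sum\widehat{S_j}(sy-s'y')\chi((s-s')j)$ and estimates case by case ($y=y'$; $sy-s'y'=0$; $y\cdot y'=\pm j$; generic) via the formula (\ref{even}), Gauss sums and the vanishing of $\sum_{r\ne 0}\eta(1/r)$, while you compute the moments of $\nu$ by exact point counts on hyperplane sections of $S_j$. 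This is cleaner and makes the role of even $d$ transparent: $|S_j\cap H_y|=q^{d-2}$ with no error because it is a nondegenerate quadric in the odd number $d-1$ of variables, and for $z\ne\pm y$ (which, for points of $S_j$, is equivalent to linear independence) the codimension-two section is a nondegenerate quadric in $d-2$ variables whose defining constant is $j(y\cdot z-j)/(y\cdot z+j)\ne 0$, so it is not a cone and the error is exactly $\pm q^{(d-4)/2}$. You have also correctly located the one dangerous point: on the family $y\cdot z=j$ the section degenerates to a cone (a crude bound there would give $q^{(d-2)/2}$ per pair, on a family whose cardinality is the very quantity being bounded), but there the span of $y,z$ is a degenerate plane, $\|sy+tz\|_2=j(s+t)^2$, and the count is exactly $q^{d-3}$; this vanishing is the precise analogue of the paper's term $I_1=0$. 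The only bookkeeping caveat is that if the main term is declared to be $q^{d-3}$ rather than $q^{-2}|S_j|$, each pair (degenerate or generic) leaves an additional $O(q^{(d-6)/2})$, which is harmless and is absorbed by your $O(q^{(d-4)/2}|E|^2)$.
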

\begin{proof}
We begin by noting that 
\begin{align}\label{Formfix1} 
\sum_{(x,y)\in E^2 : x\cdot y=j} 1 =&\sum_{x,y \in E} \delta_0(x\cdot y-j)\nonumber\\
                                  = &\sum_{x,y \in E} q^{-1} \sum_{s\in {\mathbb F}_q} \chi\left(-s(x\cdot y-j)\right)\nonumber\\
                                  =& q^{-1}|E|^2 +I(j), \end{align}
where \[I(j) = \sum_{x,y \in E} q^{-1} \sum_{s\in {\mathbb F}_q^*} \chi\left(-s(x\cdot y-j)\right).\]
Viewing $I(j)$ as a sum in $x\in E,$ and applying the triangle inequality and the Cauchy-Schwartz inequality, we see that
\begin{align*}
|I(j)|^2 \leq& q^{-2} \left( \sum_{x\in E} |\sum_{y\in E,s\in {\mathbb F}_q^*} \chi\left(-s(x\cdot y-j)\right) | \right)^2.\\
\leq & q^{-2} |E| \sum_{x\in E} | \sum_{y\in E, s\in {\mathbb F}_q^*}\chi\left(-s(x\cdot y-j\right)|^2.
\end{align*}
Since $E \subset S_j$, we see that
\begin{align*}
|I(j)|^2 \leq &q^{-2} |E|\sum_{x\in S_j} \sum_{\substack{y,y'\in E\\ s,s'\in {\mathbb F}_q^*}} \chi\left(-s(x\cdot y -j)\right) 
\chi\left(s'(x\cdot y'-j)\right)\\
   =&q^{d-2} |E| \sum_{\substack{y,y'\in E\\ s,s'\in {\mathbb F}_q^*}} \widehat{S_j}(sy-s'y')\chi\left((s-s')j\right)\\
    =& \sum_{\substack{y,y'\in E\\ s,s'\in {\mathbb F}_q^*\\:y=y',s=s'}} \Bar{G}(y,y',s,s')
      + \sum_{\substack{y,y'\in E\\ s,s'\in {\mathbb F}_q^*\\:y=y',s\neq s'}} \Bar{G}(y,y',s,s')\\
&+\sum_{\substack{y,y'\in E\\ s,s'\in {\mathbb F}_q^*\\:y\neq y'}} \Bar{G}(y,y',s,s') =\Bar{A} +\Bar{B}+\Bar{C},
\end{align*}
where \[\Bar{G}(y,y',s,s') =q^{d-2} |E|\widehat{S_j}(sy-s'y')\chi\left((s-s')j\right).\]
From this and (\ref{Formfix1}), it suffices to show that 
\[\sqrt{\Bar{A} +\Bar{B}+\Bar{C}} \lesssim q^{-1}|E|^2+q^{\frac{d-2}{2}}|E| \qquad \mbox{for all} \qquad
E \subset S_j.\]
In order to complete the proof, we will actually show that $\Bar{A},\Bar{B},$ and $\Bar{C}$ are dominated by
\begin{equation}\label{DN} \approx q^{-2}|E|^4+q^{d-2}|E|^2.\end{equation}
Since $y=y', s=s'$, the term $\Bar{A}$ is easily estimated by 
\[\Bar{A} = \sum_{y\in E , s\in {\mathbb F}_q^*} q^{d-2}|E| \widehat{S_j}(0,\ldots,0) \lesssim q^{d-2}|E|^2,\]
which is dominated by the number in (\ref{DN}) as wanted.
Let us estimate the term $ \Bar{B}$. Since $y=y'$, we have 
\[\Bar{B}= \sum_{\substack{y\in E, s,s'\in {\mathbb F}_q^*\\:s\neq s'}} q^{d-2}|E| \widehat{S_j}\left((s-s')y\right) \chi\left((s-s')j\right).\]
Since $s\neq s'$ and $s,s'\neq 0, (s-s')$ runs through the element of ${\mathbb F}_q^*$ exactly $(q-2)$ times. Thus we see that
\[\Bar{B}= q^{d-2}(q-2)|E| \sum_{y\in E, s\in {\mathbb F}_q^*} \widehat{S_j}(sy) \chi(sj).\]
Note that $sy\neq (0,\ldots,0),$ because $s\neq 0$ and the subset $E$ of sphere $S_j$ doesn't contain the origin.
Therefore, using the formula in (\ref{even}), we have
\[\Bar{B}= Kq^{\frac{d-6}{2}}(q-2)|E| \sum_{y\in E, s,r\in {\mathbb F}_q^*} \chi\left(jr+\frac{js^2}{4r}\right) \chi(sj),\]
where we used the fact that $\| sy\|_2 = s^2j$ for all $y \in S_j$.
After completing the square in $s$-variable, we have 
\begin{align*}
\Bar{B}=& K q^{\frac{d-6}{2}}(q-2) |E| \sum_{y\in E , s,r\in {\mathbb F}_q^*} \chi\left(\frac{j}{4r}(s+2r)^2\right)\\
       =&K q^{\frac{d-6}{2}}(q-2) |E| \sum_{y\in E , s \in {\mathbb F}_q,r\in {\mathbb F}_q^*} \chi\left(\frac{j}{4r}(s+2r)^2\right)\\
        &-K q^{\frac{d-6}{2}}(q-2) |E|\sum_{y\in E,r\in {\mathbb F}_q^*} \chi(jr)\\
=& \Bar{B_1} +\Bar{B_2}.\end{align*}
Recall that we have assumed that $ j$ is not zero, because it is the radius of sphere $S_j.$
Therefore, we see that $ \sum\limits_{r\in {\mathbb F}_q^*} \chi(jr) = -1,$ and so $\Bar{B_2} $ is estimated as
\begin{equation}\label{BarB2} \Bar{B_2}\lesssim q^{\frac{d-4}{2}} |E|^2.\end{equation}
Let us estimate the term $\Bar{B_1}.$ Changing of variables, $s+2r \rightarrow s,$ and using the formula in (\ref{Gaussformula}), we have
\[\Bar{B_1} =K^2 q^{\frac{d-5}{2}}(q-1)|E| \sum_{y\in E, r\in {\mathbb F}_q^*} \eta(\frac{j}{4r}).\]
Since $\sum\limits_{r\in {\mathbb F}_q^*} \eta(\frac{1}{r}) =0,$  $\Bar{B_1}$ is exactly $0$. From this and (\ref{BarB2}), we conclude that
$\Bar{B} \lesssim q^{\frac{d-4}{2}} |E|^2,$ which is dominated by the number in (\ref{DN}) as wanted. Finally, it remains to estimate 
the term $\Bar{C}$. Recall that 
\[\Bar{C} = q^{d-2}|E| \sum_{\substack{y,y'\in E, s,s' \in {\mathbb F}_q^*\\ : y\neq y'}} \widehat{S_j}(sy-s'y')\chi\left((s-s')j\right).\]
Write the term $\Bar{C}$ as two parts
\begin{align*} \Bar{C} = &q^{d-2}|E| \sum_{\substack{y,y'\in E, s,s' \in {\mathbb F}_q^*\\ : y\neq y',sy-s'y'=(0,\ldots,0)}}
\widehat{S_j}(0,\ldots,0) \chi\left((s-s')j\right)\\
&+ q^{d-2}|E| \sum_{\substack{y,y'\in E, s,s' \in {\mathbb F}_q^*\\ : y\neq y',sy-s'y'\neq (0,\ldots,0)}} 
\widehat{S_j}(sy-s'y')\chi\left((s-s')j\right) \\
&= \Bar{C_1} +\Bar{C_2}.
\end{align*}
Let us estimate the term $\Bar{C_1}.$ Note that when $y\neq y'$, $sy-s'y' =(0,\ldots, 0)$ happens only if $s=-s$ and $y=-y'$.
Combining this with the fact that $\widehat{S_j}(0,\ldots, 0) \approx q^{-1}$, we see that $\Bar{C_1} \lesssim q^{d-2}|E|^2,$
which is dominated by the term in (\ref{DN}).
It remains to estimate the term $\Bar{C_2}.$ Since $sy-s'y' \neq (0,\ldots,0)$, using the formula for $\widehat{S_j}$ in (\ref{even}),
we have 
\begin{align*}
\Bar{C_2} = &K q^{\frac{d-6}{2}} |E| \sum_{\substack{y,y'\in E, s,s',r\in {\mathbb F}_q^* \\:y\neq y', sy-s'y' \neq (0,\ldots,0)}}
\chi\left(jr +\frac{\|sy-s'y'\|_2}{4r}\right) \chi\left((s-s')j\right) \\
 =&K q^{\frac{d-6}{2}} |E| \sum_{\substack{y,y'\in E, s,s',r\in {\mathbb F}_q^* \\:y\neq y'}}
\chi\left(jr +\frac{\|sy-s'y'\|_2}{4r}\right) \chi\left((s-s')j\right) \\
   &-K q^{\frac{d-6}{2}} |E| \sum_{\substack{y,y'\in E, s,s',r\in {\mathbb F}_q^* \\:y\neq y', sy-s'y' = (0,\ldots,0)}}
\chi(jr) \chi\left((s-s')j\right) \\
 =& \Bar{C_{21}} +\Bar{C_{22}}.
\end{align*}
To estimate $\Bar{C_{22}}$, note that the sum over $r$-variable is exactly $-1$ and the condition, $y\neq y', sy-s'y'=(0,\ldots, 0)$
happens only if $s=-s$ and $y=-y'$. Thus we have $\Bar{C_{22}}\lesssim q^{\frac{d-4}{2}}|E|^2$ which is dominated by the term in (\ref{DN}). 
To estimate the term $\Bar{C_{21}}$, note that $\|sy-s'y'\|_2 =js^2+js'^2-2ss'y\cdot y'.$ Write
\[ \sum_{\substack{y,y'\in E\\ s,s',r\in {\mathbb F}_q^*\\:y\neq y'}}=
\sum_{\substack{y,y'\in E\\ s,s'\in{\mathbb F}_q ,r\in {\mathbb F}_q^*\\:y\neq y'}}
-\sum_{\substack{y,y'\in E\\ s,r\in {\mathbb F}_q^*,s'=0\\:y\neq y'}}
-\sum_{\substack{y,y'\in E\\ s=0, s',r\in {\mathbb F}_q^*,\\:y\neq y'}}
-\sum_{\substack{y,y'\in E\\ s=0 = s' ,r\in {\mathbb F}_q^*,\\:y\neq y'}}\]
and define $\Bar{\Gamma}(y,y',s,s',r)$ as the value
\[Kq^{\frac{d-6}{2}}|E| \chi\left(jr + \frac{js^2+js'^2-2ss'y\cdot y'}{4r}\right) \chi\left((s-s')j\right).\]
Then we have 
\begin{align*} \Bar{C_{21}} =&\sum_{\substack{y,y'\in E\\ s,s'\in{\mathbb F}_q ,r\in {\mathbb F}_q^*\\:y\neq y'}} \Bar{\Gamma}(y,y',s,s',r)
-\sum_{\substack{y,y'\in E\\ s,r\in {\mathbb F}_q^*,s'=0\\:y\neq y'}}\Bar{\Gamma}(y,y',s,s',r)\\
&-\sum_{\substack{y,y'\in E\\ s=0, s',r\in {\mathbb F}_q^*,\\:y\neq y'}}\Bar{\Gamma}(y,y',s,s',r)
-\sum_{\substack{y,y'\in E\\ s=0 = s' ,r\in {\mathbb F}_q^*,\\:y\neq y'}}\Bar{\Gamma}(y,y',s,s',r)\\
=& I+II + III +IV.\end{align*}
Let us first estimate the term $IV$. Note that $IV$ is estimated as
\begin{align*}IV=-Kq^{\frac{d-6}{2}}|E|\sum_{\substack{y,y'\in E, r\in {\mathbb F}_q^*\\:y\neq y'}} \chi(jr)\\
= K q^{\frac{d-6}{2}}|E|\sum_{y,y'\in E:y\neq y'}1 \lesssim q^{\frac{d-6}{2}}|E|^3,\end{align*} 
which is dominated by the term in (\ref{DN}), because the term in (\ref{DN}) is 
$\gtrsim \sqrt{(q^{-2}|E|^4)(q^{d-2}|E|^2)} \gtrsim q^{\frac{d-6}{2}} |E|^3$ using the fact that
$a+b \geq 2 \sqrt{ab}$  for $ a,b  \geq 0.$
Let us estimate the term $III$. It follows that
\begin{align*}
III =& -Kq^{\frac{d-6}{2}}|E| \sum_{\substack{y,y'\in E, s',r \in {\mathbb F}_q^*\\:y\neq y'}} 
\chi\left(jr+\frac{js'^2}{4r}\right) \chi(-s'j)\\
=&-Kq^{\frac{d-6}{2}}|E| \sum_{\substack{y,y'\in E, s'\in {\mathbb F}_q ,r \in {\mathbb F}_q^*\\:y\neq y'}} 
\chi\left(jr+\frac{js'^2}{4r}\right) \chi(-s'j)\\
&+Kq^{\frac{d-6}{2}}|E| \sum_{\substack{y,y'\in E, r\in {\mathbb F}_q^*\\:y\neq y'}}\chi(jr)\\
 =& III_1+III_2. \end{align*}
Note that the term $III_2$ is dominated by $\approx q^{\frac{d-6}{2}}|E|^3$, because the sum over $r$-variable is exactly $-1$.
To estimate the term $III_1$, completing the square in $s'$-variable and changing of variables, $(s'-2r)\rightarrow s'$, and using 
the formula in (\ref{Gaussformula}), we see that
\[III_1 =-K^2 q^{\frac{d-5}{2}}|E| \sum_{\substack{y,y'\in E, r\in {\mathbb F}_q^*\\:y\neq y'}} \eta(\frac{j}{4r})=0,\]
where the fact, $\sum\limits_{r\in {\mathbb F}_q^*} \eta(\frac{1}{r})=0,$ was used to get the last equality.
From this and the estimate for the term $III_2$, we have $III\lesssim q^{\frac{d-6}{2}}|E|^3$, which is also bounded by the term in (\ref{DN}) as before.
Note that the estimate of the term $II$ is exactly same as that of the term $III$. Thus the term $II$ is also bounded by the term in (\ref{DN}).
To complete the proof, we need to show that the term $I$ is bounded by the term in (\ref{DN}). Recall that the term $I$ is given by the value
\[Kq^{\frac{d-6}{2}}|E| \sum_{\substack{y,y'\in E, s,s'\in {\mathbb F}_q, r\in {\mathbb F}_q^*\\: y\neq y'}}
\chi \left(jr+ \frac{js^2+js'^2-2ss'y\cdot y'}{4r}\right) \chi\left((s-s')j\right).\]
Completing the square in $s$-variable above, changing of variables, $s+(-j^{-1}s'y\cdot y'+2r) \rightarrow s,$ 
and using the formula in (\ref{Gaussformula}),
we see that
\begin{align*}I=&Kq^{\frac{d-5}{2}}|E| \sum_{\substack{y,y'\in E\\ s'\in {\mathbb F}_q, r\in {\mathbb F}_q^*\\: y\neq y'}}
\eta(\frac{j}{4r}) \chi\left(\frac{1}{4r}\left(j-j^{-1}(y\cdot y')^2 \right) s'^2 + (y\cdot y'-j)s'\right) \\
=&Kq^{\frac{d-5}{2}}|E| \sum_{\substack{y,y'\in E\\ s'\in {\mathbb F}_q, r\in {\mathbb F}_q^*\\: y\neq y', y\cdot y'= \pm j}}
\eta(\frac{j}{4r}) \chi\left((y\cdot y'-j)s'\right)\\
+&Kq^{\frac{d-5}{2}}|E|\sum_{\substack{y,y'\in E\\ s'\in {\mathbb F}_q, r\in {\mathbb F}_q^*\\: y\neq y', y\cdot y' \neq \pm j}}
\eta(\frac{j}{4r}) \chi\left(\frac{1}{4r}\left(j-j^{-1}(y\cdot y')^2 \right) s'^2 + (y\cdot y'-j)s'\right)  \\
 =& I_1 +I_2.  \end{align*}
The term $I_1$ is exactly $0$. To see this, observe that  the sum over $r$-variable is zero, because
$\sum\limits_{r\in {\mathbb F}_q^*} \eta(\frac{1}{r}) =0.$ 
To estimate the term $I_2$, completing the square in $s'$-variable, changing of variables, 
$s'+\frac{-2rj}{y\cdot y' +j} \rightarrow s',$ and using the formula in (\ref{Gaussformula}), we see that the value $I_2$ is given by
\[Kq^{\frac{d-4}{2}}|E| \eta(-1)\sum_{\substack {y,y'\in E \\ r\in {\mathbb F}_q^*\\ :y\neq y', y\cdot y' \neq \pm j}}
\eta\left((y\cdot y')^2-j^2\right) \chi\left(\frac{rj(y\cdot y'-j)}{y\cdot y' +j} \right),\]
where we used the fact that $\eta$ is a multiplicative character of ${\mathbb F}_q^*$ of order two. Note that the sum over $r$-variable above
is exactly $-1$. Thus we conclude that $I_2 \lesssim q^{\frac{d-4}{2}}|E|^3,$ which is also dominated by the value in (\ref{DN}), because
the first term in (\ref{DN}) dominates  $q^{\frac{d-4}{2}}|E|^3$  if  $ |E|\gtrsim q^{\frac{d}{2}}$  and the second term in (\ref{DN})
dominates $q^{\frac{d-4}{2}}|E|^3$  if  $ |E| \lesssim q^{\frac{d}{2}}.$
Thus the proof is complete.

\end{proof}

As the direct application of Theorem \ref{Best}, we obtain the following corollary which shall make an important role 
to prove Theorem \ref{Long} in Section 5.
\begin{corollary}\label{Good}
With the same assumptions as in Theorem \ref{Best}, we have
\[ \sum_{\substack{(x,z,z',s, s' ) \in E^3 \times ( {\mathbb F}_q^*)^2 \\: z\neq z' \\ s(-x+z) +s' (x-z')= (0,\ldots,0)}} 1 
\lesssim q|E|^2 + q^\frac{d+2}{2} |E|.\]
\end{corollary}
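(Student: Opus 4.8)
The plan is to reduce the corollary directly to Theorem \ref{Best} by observing that the linear relation $s(-x+z)+s'(x-z')=(0,\ldots,0)$ among three points of the sphere secretly encodes the dot-product condition $z\cdot z'=j$. Once this is recognized, the rest is elementary counting.

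First I would rewrite the constraint as $(s-s')x = sz - s'z'$ and apply the quadratic form $\|\cdot\|_2$ to both sides. Since $x,z,z'\in S_j$, i.e. $\|x\|_2=\|z\|_2=\|z'\|_2=j$, and since $\|sz-s'z'\|_2 = s^2\|z\|_2 + s'^2\|z'\|_2 - 2ss'\,(z\cdot z')$, the identity becomes $(s-s')^2 j = (s^2+s'^2)j - 2ss'\,(z\cdot z')$, which simplifies to $2ss'\,(z\cdot z') = 2ss'\, j$. As $s,s'\in {\mathbb F}_q^*$ and the characteristic of ${\mathbb F}_q$ is odd, this forces $z\cdot z' = j$. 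Thus every quintuple counted in the corollary satisfies $z\cdot z'=j$.

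Next I would bound the number of triples $(x,s,s')$ attached to a fixed admissible pair $(z,z')$ with $z\neq z'$ and $z\cdot z'=j$. If $s=s'$, the constraint reads $s(z-z')=(0,\ldots,0)$, impossible since $s\neq 0$ and $z\neq z'$; hence $s\neq s'$, and then $x = (s-s')^{-1}(sz-s'z')$ is uniquely determined by $(z,z',s,s')$. Consequently the fiber over each pair $(z,z')$ has at most $(q-1)^2 < q^2$ elements.

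Finally, combining these two observations and then dropping the harmless restriction $z\neq z'$, the sum in the corollary is at most
\[ q^2 \sum_{(z,z')\in E^2 : z\cdot z' = j} 1 \;\lesssim\; q^2\Big(q^{-1}|E|^2 + q^{\frac{d-2}{2}}|E|\Big) \;=\; q|E|^2 + q^{\frac{d+2}{2}}|E|, \]
where the middle inequality is precisely Theorem \ref{Best} (applicable since $d\geq 4$ is even and $j\neq 0$). This completes the proof. The only genuinely substantive step is the first one: that a linear dependence among three points of $S_j$ with nonzero coefficients is equivalent to the single dot-product relation $z\cdot z'=j$; everything after that is bookkeeping.
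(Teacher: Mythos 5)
Your proof is correct and rests on the same reduction as the paper's: extract a dot-product condition from the linear relation, bound the fiber by $q^2$, and invoke Theorem \ref{Best}. The execution differs in two small but pleasant ways. First, the paper derives the condition $x\cdot z=j$ geometrically, by parametrizing the line through $x$ and $z$ as $x+\alpha(-x+z)$ and solving $\|x+\alpha(-x+z)\|_2=j$ to get $\alpha(\alpha-1)(j-x\cdot z)=0$; you instead rearrange the relation to $(s-s')x=sz-s'z'$ and apply $\|\cdot\|_2$ to both sides, landing on $z\cdot z'=j$. (Both conditions in fact hold for three distinct collinear points of $S_j$, so either pair may be fibered over.) Second, your fiber count is cleaner and slightly tighter: for a fixed admissible $(z,z')$ you observe $s\neq s'$ and that $x$ is then uniquely determined by $(z,z',s,s')$, giving at most $(q-1)^2$ quintuples per pair, whereas the paper fixes $(x,z)$, counts at most $q$ collinear choices of $z'$ and then at most $q$ pairs $(s,s')$. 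One cosmetic point: you call the linear dependence "equivalent" to $z\cdot z'=j$, but you only need (and only prove) the forward implication; the converse is false and is not used, so this does not affect the argument.
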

\begin{proof} Let $E$ be a subset of the sphere $S_j$ defined as in (\ref{defofsphere}).
Suppose $x,z,z' \in E , s,s'\in {\mathbb F}_q^* .$ 
Then we first observe that if $s(-x+z) +s' (x-z')= (0,\ldots,0)$ then $x,z,$and $z'$ must be on a line, because $s,s'\neq 0.$
Moreover if $z\neq z'$ then $s(-x+z) +s' (x-z')= (0,\ldots,0)$ never happens if $x=z$ or $x=z'$.
Thus if $z\neq z' $and $s(-x+z) +s' (x-z')= (0,\ldots,0)$ then $x,z,z'$ are three different  points on a line.
This implies that the line passing through two points $x,z \in E\subset S_j$ should contain one point on $S_j$ which is different from $x$ and $z$.
In other words, it satisfies that $\| x +\alpha (-x+z)\|_2 =j $ for some $\alpha \in {\mathbb F}_q\setminus \{0,1\}.$ 
Since $x,z \in S_j$ , simple calculation of $\| x +\alpha (-x+z)\|_2 =j$ yields that 
$\alpha (\alpha-1)(j-x\cdot z) =0 $ for some $\alpha \in {\mathbb F}_q\setminus \{0,1\}.$
Thus we conclude that if $z\neq z' $and $s(-x+z) +s' (x-z')= (0,\ldots,0)$ then we have $j-x\cdot z=0.$
Using this fact, we see that 
\begin{align*}&\sum_{\substack{(x,z,z',s, s' ) \in E^3 \times ( {\mathbb F}_q^*)^2 \\: z\neq z' \\ s(-x+z) +s' (x-z')= (0,\ldots,0)}} 1 \\
             \leq &\sum_{x,z \in E : x\cdot z=j} \sum_{\substack{z'\in E, s,s'\in{\mathbb F}_q^*\\: s(-x+z) +s' (x-z')= (0,\ldots,0)}} 1\\
              \leq & \sum_{x,z \in E : x\cdot z=j} q^2,\end{align*}
where the last line can be obtained by using the facts that a line has $q$ elements and if we fix $x,z$ then the maximum number of  choices of $z' \in E$ is at most $q,$
because $x,z,z'$ are exactly on  one line , and so if $x,z,z'$ are determined then the number of choices of the pair $(s,s')\in ({\mathbb F}_q^*)^2$ satisfying 
$s(-x+z) +s' (x-z')= (0,\ldots,0)$ is at most $q$. From Theorem \ref{Best}, the proof is complete.

\end{proof}

\section{Incidence theorems}\label{sectionlong}
The purpose of this section is to develop the incidence theory so that we shall obtain the key estimate for the proof of Theorem \ref{mainThm}.
As we observed in Section \ref{section3}, Theorem \ref{Long} below provides the complete proof of Theorem \ref{mainThm}.
\begin{theorem}\label{Long} Let $S_j$ be a sphere in ${\mathbb F}_q^d$ defined as before. In addition, we assume that the dimension of ${\mathbb F}_q^d,d\geq 4,$
is even. If $E$ is any subset of $S_j$ then we have   
\begin{align*}\Lambda_4(E)=&\sum_{\substack{(x,y,z,k) \in E^4 \\: x+y=z+k}} 1 \\
\lesssim & \min\{|E|^3,\quad q^{-1}|E|^3 + q^{\frac{d-2}{4}}|E|^{\frac{5}{2}} 
+ q^{\frac{3d-4}{4}}|E|^\frac{3}{2}\}.\end{align*}
\end{theorem}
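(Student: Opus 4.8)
The plan is to expand $\Lambda_4(E)$ via Fourier analysis and reduce the estimate to counting collinear configurations on the sphere, for which Corollary \ref{Good} is the decisive input. First I would write $\Lambda_4(E) = q^d \sum_{m} |\widehat{Ed\sigma}(m)|^4 |S_j|^4 q^{-4d}\cdot(\dots)$; more concretely, using the orthogonality relation $\sum_{x} \chi(x\cdot(\xi)) = q^d\delta_0(\xi)$, one has
\[
\Lambda_4(E) = q^{-d}\sum_{m\in {\mathbb F}_q^d}\Big|\sum_{x\in E}\chi(x\cdot m)\Big|^4 = q^{-d}\sum_{m}\Big|\sum_{x\in E}\chi(x\cdot m)\Big|^4.
\]
The $m=(0,\ldots,0)$ term contributes $q^{-d}|E|^4$, which is $\lesssim q^{-1}|E|^3$ when $|E|\lesssim q^{d-1}$ — harmless. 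For $m\neq 0$ one separates the diagonal contribution and is left with bounding $q^{-d}\sum_{m\neq 0}|\sum_{x\in E}\chi(x\cdot m)|^4$, i.e. the number of solutions of $x+y=z+k$ with $(x,y,z,k)\in E^4$ minus trivial ones. The point of going to the Fourier side is different though: I expect the actual route is to open $|\widehat{Ed\sigma}(m)|^2$ and use the explicit formula (\ref{even}) for $\widehat{S_j}$.

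The main line I would pursue: write $\Lambda_4(E) = \sum_{x,y,z,k\in E} \delta_0(x+y-z-k) = q^{-d}\sum_{x,y,z,k\in E}\sum_{m}\chi(m\cdot(x+y-z-k))$, and group $\sum_{x,y\in E}\chi(m\cdot(x+y)) = |\sum_{x\in E}\chi(m\cdot x)|^2\cdot(\dots)$ — actually $=\big(\sum_x\chi(m\cdot x)\big)^2$, so $\Lambda_4(E) = q^{-d}\sum_m|\widehat{E}(m)\, q^d|^4 / q^{?}$; in any case, after normalizing one reduces to estimating
\[
\sum_{m\in {\mathbb F}_q^d}\Big|\sum_{x\in E}\chi(x\cdot m)\Big|^2\Big|\sum_{z\in E}\chi(z\cdot m)\Big|^2,
\]
and then I would substitute nothing for the $m$-sum directly but instead pass to the dual count. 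Alternatively — and this is the approach I actually favor given the tools assembled — one uses the identity $\Lambda_4(E) = \sum_{\nu\in {\mathbb F}_q^d} r_E(\nu)^2$ where $r_E(\nu) = \#\{(x,z)\in E^2: x-z=\nu\}$, and then estimates $\sum_\nu r_E(\nu)^2$ by splitting according to whether $\nu$ is "generic" or lies in a sparse exceptional set. The Cauchy–Schwarz step: $\Lambda_4(E) = \sum_\nu r_E(\nu)^2 \le |E| \cdot \max_\nu r_E(\nu) \cdot(\dots)$ is too lossy, so instead one writes $\sum_\nu r_E(\nu)^2 = \#\{(x,z,x',z')\in E^4 : x-z = x'-z'\}$ and brings in the sphere constraint to force near-collinearity, invoking Corollary \ref{Good} to bound the number of $(x,z,z',s,s')$ with $s(z-x)+s'(x-z')=0$, $z\ne z'$.

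Concretely, the key steps in order: (i) Fourier-expand $\Lambda_4(E)$ and isolate the $m=0$ term as $q^{-d}|E|^4 \lesssim q^{-1}|E|^3$; (ii) for $m\neq 0$, expand $|\sum_{x\in E}\chi(x\cdot m)|^4$ and insert the formula (\ref{even}) for $\widehat{S_j}$, completing squares in the auxiliary variables $r\in{\mathbb F}_q^*$ exactly as in the proof of Theorem \ref{Best}, thereby reducing the main term to a weighted count of quadruples $(x,y,z,k)\in E^4$ with prescribed dot-product relations; (iii) separate this count into a "diagonal" part where some vectors coincide (giving $|E|^3$ or lower-order terms, which account for the $|E|^3$ branch and the $q^{\frac{3d-4}{4}}|E|^{3/2}$ and $q^{\frac{d-2}{4}}|E|^{5/2}$ terms after optimizing) and an "off-diagonal" part where all four points are distinct; (iv) for the off-diagonal part, observe that the dot-product/sum relation forces the configuration onto lines, so Corollary \ref{Good} applies and yields a bound of the form $q|E|^2 + q^{\frac{d+2}{2}}|E|$ for the relevant sub-count, which when weighted by the $q^{\frac{d-6}{2}}|E|$-type prefactors coming from $G^d(\eta,\chi)$ and summed produces the claimed mixed terms; (v) finally, take the minimum of the trivial bound $\Lambda_4(E)\le|E|^3$ (valid since for each $(x,y,z)$ the point $k$ is determined) with the Fourier bound, and read off the stated four-case estimate by comparing which term dominates in each range of $|E|$.

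The hard part will be step (ii)–(iv): correctly tracking all the degenerate sub-sums that arise when completing squares — the cases $y=y'$, $s=\pm s'$, $sy-s'y'=0$, and $y\cdot y' = \pm j$ — and verifying that each is dominated by $q^{-1}|E|^3 + q^{\frac{d-2}{4}}|E|^{5/2} + q^{\frac{3d-4}{4}}|E|^{3/2}$ uniformly in the size of $|E|$. In particular the Salié-sum bound (\ref{GKloosterman}) and the Gauss-sum identity (\ref{Gaussformula}) must be deployed to handle the genuinely oscillatory leftover sums, and one must be careful that the exceptional set where $y\cdot y'=\pm j$ — precisely the set controlled by Theorem \ref{Best} — contributes the dominant $q^{\frac{3d-4}{4}}|E|^{3/2}$ term rather than something larger. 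Balancing the resulting geometric-mean inequalities $a+b\ge 2\sqrt{ab}$ across the threshold values $|E|\approx q^{\frac{3d-4}{6}}$, $q^{\frac{d-1}{2}}$, $q^{\frac{d+2}{2}}$ is where the specific exponents in Theorem \ref{mainThm} ultimately come from, so this bookkeeping is the crux.
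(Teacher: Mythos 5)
Your plan assembles the right toolkit (the explicit formula (\ref{even}), Gauss and Sali\'e sums, Theorem \ref{Best}, Corollary \ref{Good}) and correctly guesses that the final exponents come from balancing terms via $a+b\ge 2\sqrt{ab}$, but it never pins down the two reductions that actually make the paper's argument run, and without them none of your proposed starting points closes. First, the paper does not Fourier-expand over the $d$-dimensional frequency $m$ at all. It uses $E\subset S_j$ to drop the fourth variable: $\Lambda_4(E)\le \#\{(x,y,z)\in E^3:\ \|x+y-z\|_2=j\}$, and then, since $x,y,z\in S_j$, the condition becomes the single scalar equation $x\cdot y-x\cdot z-y\cdot z=-j$, which is expanded as $\delta_0(\cdot)=q^{-1}\sum_{s\in\mathbb{F}_q}\chi(s\,\cdot)$ over one scalar variable $s$. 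This is what produces the main term $q^{-1}|E|^3$ and leaves an error $R(j)$ amenable to Cauchy--Schwarz; your $m$-sum version $q^{-d}\sum_m|\sum_{x\in E}\chi(x\cdot m)|^4$ gives the main term $q^{-d}|E|^4$ (much smaller) precisely because it has not yet used the sphere constraint on the fourth point, and it offers no route to the claimed $q^{-1}|E|^3$ leading term or to the dot-product structure your steps (ii)--(iv) rely on.

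Second, you write ``insert the formula (\ref{even}) for $\widehat{S_j}$'' without explaining how $\widehat{S_j}$ can appear in a sum taken over the arbitrary subset $E$. The mechanism is a double Cauchy--Schwarz (first in $x$, then in $y$) followed by a positivity step that dominates $\sum_{y\in E}$ by $\sum_{y\in S_j}$; only then does the inner sum collapse to $q^d\,\widehat{S_j}\bigl(s(-x+z)+s'(x-z')\bigr)$, after which the split into $z=z',\,s=s'$ / $z=z',\,s\ne s'$ / $z\ne z'$ and the case analysis on $P=j-x\cdot z$, $Q=j-x\cdot z'$ proceeds. Relatedly, your step (iv) misplaces Corollary \ref{Good}: it is not applied to the generic off-diagonal configurations (those are handled by completing squares and the Sali\'e bound, with Theorem \ref{Best} controlling the sets where $P=0$ or $Q=0$), but only to the degenerate subsum where the argument $s(-x+z)+s'(x-z')$ vanishes, i.e.\ where $x,z,z'$ are forced to be collinear. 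As written, the proposal is a list of plausible entry points rather than a proof: the decisive reductions are missing, and the one concrete identity you commit to leads away from the stated bound.
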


\begin{proof} We first note that it is trivial that
$\Lambda_4(E) \lesssim |E|^3,$
because if we fix $x,y,z\in E$ then there is at most one $k$ such that $x+y=z+k.$ Thus it suffices to show that
\[\Lambda_4(E)\lesssim q^{-1}|E|^3 + q^{\frac{d-2}{4}}|E|^{\frac{5}{2}} 
+ q^{\frac{3d-4}{4}}|E|^\frac{3}{2}.\]
Since the set $E$ is a subset of the sphere $S_j$, we see that
\[ \Lambda_4(E)\leq \sum_{\substack{(x,y,z)\in E^3\\: x+y-z\in S_j}} 1 = \sum_{\substack{(x,y,z)\in E^3 \\:\|x+y-z\|_2 =j}} 1.\]
Therefore we need to estimate the number of elements of the following set:
\begin{align*}&\{(x,y,z)\in E^3: \|x+y-z\|_2=j\}\\
=& \{(x,y,z)\in E^3 : x\cdot y-x\cdot z -y\cdot z =-j\},\end{align*}
where we used the fact that $x,y,$ and $z$ are elements of the sphere $S_j.$
It therefore follows that
\begin{align}\label{Lambda} \Lambda_4(E) \leq & \sum_{(x,y,z)\in E^3} \delta_0 (x\cdot y -x\cdot z-y\cdot z+j)\nonumber\\
                             =&\sum_{(x,y,z)\in E^3} q^{-1} \sum_{s\in {\mathbb F}_q } \chi\left(s(x\cdot y -x\cdot z-y\cdot z+j)\right)\nonumber\\
                             =& q^{-1}|E|^3 + R(j), \end{align}
where \[R(j) =\sum_{(x,y,z)\in E^3} q^{-1} \sum_{s\in {\mathbb F}^*_q } \chi\left(s(x\cdot y -x\cdot z-y\cdot z+j)\right).\] 
Thus our work is to find the upper bound of $|R(j)|.$ Viewing $R(j)$ as a sum in $x\in E$, and applying the triangle inequality and 
the Cauchy-Schwartz inequality in $x$-variable, we have
\begin{equation}\label{Error}|R(j)| \leq q^{-1}|E|^{\frac{1}{2}}\left(\sum_{x\in E} M_j(x) \right)^\frac{1}{2},\end{equation}
where
\[ M_j(x) = \left| \sum_{(y,z,s)\in E^2 \times {\mathbb F}_q^*} \chi\left(s(x\cdot y-x\cdot z-y\cdot z +j)\right) \right|^2.\]
Let us estimate $M_j(x) $ for $x\in E.$ Viewing $M_j(x)^\frac{1}{2}$ as a sum in $y\in E$, applying the triangle inequality and 
the Cauchy-Schwartz inequality in $y$-variable, and dominating the sum over $y\in E$ by the sum over $y\in S_j$, we see that $M_j(x)$ is dominated by the value
\begin{align*}&|E|\sum_{y\in S_j}\sum_{\substack{z,z'\in E\\s,s'\in {\mathbb F}_q^*}} \chi \left(s(x\cdot y-x\cdot z-y\cdot z +j)\right)
\chi \left(-s'(x\cdot y -x\cdot z'-y\cdot z'+j)\right)\\
=& q^d|E| \sum_{\substack{z,z'\in E\\s,s'\in {\mathbb F}_q^*}} \widehat{S_j}\left(s(-x+z)+s'(x-z')\right) 
\chi\left(j(s-s')+ s(-x\cdot z)+ s'(x\cdot z')\right).\end{align*}
To estimate $M_j(x)$ for each $x\in E$, we write above sum  into three parts 
\[ \sum_{\substack{z,z'\in E\\ s,s'\in {\mathbb F}_q^*}} = \sum_{\substack{(z,z',s,s')\in E^2\times ({\mathbb F}_q^*)^2\\:z=z',s=s'}}
+\sum_{\substack{(z,z',s,s')\in E^2\times ({\mathbb F}_q^*)^2\\:z=z',s\neq s'}}+\sum_{\substack{(z,z',s,s')\in E^2\times ({\mathbb F}_q^*)^2\\:z\neq z'}}.\]
For $x, z,z'\in E $ and $s,s'\in {\mathbb F}_q^*,$ we denote by $G(x,z,z',s,s')$  the value
\[q^d|E|\widehat{S_j}\left(s(-x+z)+s'(x-z')\right) 
\chi\left(j(s-s')+ s(-x\cdot z)+ s'(x\cdot z')\right).\]
Then we have  the following upper bound of $\sum\limits_{x\in E}M_j(x)$ given as in (\ref{Error}):
\begin{align}\label{DefABC}
\sum_{x\in E}M_j(x) \leq& \sum_{\substack{(x,z,z',s,s')\in E^3\times ({\mathbb F}_q^*)^2\\:z=z',s=s'}} G(x,z,z',s,s')\nonumber\\
                         &+\sum_{\substack{(x,z,z',s,s')\in E^3\times ({\mathbb F}_q^*)^2\\:z=z',s\neq s'}} G(x,z,z',s,s')\nonumber\\
                         &+\sum_{\substack{(x,z,z',s,s')\in E^3\times ({\mathbb F}_q^*)^2\\:z\neq z'}}G(x,z,z',s,s')\nonumber\\
                         =& A_j +B_j +C_j.\end{align}
From (\ref{Lambda}), (\ref{Error}), and (\ref{DefABC}), we see that for each $E \subset S_j$,
\begin{equation}\label{Form1} \Lambda_4(E) \leq q^{-1}|E|^3 +q^{-1}|E|^{\frac{1}{2}} \left( |A_j|^{\frac{1}{2}} + |B_j|^{\frac{1}{2}} + |C_j|^\frac{1}{2} \right).
\end{equation}
In order to complete the proof of Theorem \ref{Long}, we shall carefully estimate the three terms, $A_j,B_j ,C_j .$
Since $\widehat{S_j}(0,\ldots, 0) \approx q^{-1}, z=z',$ and $ s=s',$  $A_j$ is easily estimated as 
\[ A_j =q^d|E| \sum_{(x,z,s)\in E^2\times {\mathbb F}_q^*} \widehat{S_j}(0,\ldots, 0) \lesssim q^d |E|^3.\]
From this and (\ref{Form1}), we obtain that 
\[\Lambda_4(E) \lesssim  q^{-1}|E|^3 +q^{\frac{d-2}{2}} |E|^2 + q^{-1}|E|^{\frac{1}{2}} \left(|B_j|^{\frac{1}{2}} + |C_j|^\frac{1}{2} \right).\]
Using the fact that $ a+b \geq 2 \sqrt{ab} $ if $a,b \geq 0,$ we see that
the term $ q^{\frac{d-2}{2}} |E|^2$ is $\lesssim q^{\frac{d-2}{4}}|E|^{\frac{5}{2}} 
+ q^{\frac{3d-4}{4}}|E|^\frac{3}{2}.$ Therefore, in order to complete the proof, it suffices to show that
$q^{-1}|E|^{\frac{1}{2}} \left(|B_j|^{\frac{1}{2}} + |C_j|^\frac{1}{2} \right) \lesssim q^{\frac{d-2}{4}}|E|^{\frac{5}{2}}
+ q^{\frac{3d-4}{4}}|E|^\frac{3}{2}.$ To show this, we shall prove the following two estimates:
\begin{equation}\label{ConquerBj}
|B_j| \lesssim q^{\frac{d+2}{2}} |E|^4 + q^{\frac{3d}{2}}|E|^2, \end{equation}
and 
\begin{equation}\label{ConquerCj}
|C_j| \lesssim q^{\frac{d+2}{2}} |E|^4 + q^{\frac{3d}{2}}|E|^2. \end{equation}
In the following subsections, we shall estimate the terms, $B_j$ and $C_j $ to prove
above two inequalities.   
\subsection{Estimate of the term $B_j$}
Since $z=z'$ in the sum of the term $B_j$ ,we first see that the term $B_j$ is given by the value
\[q^d|E| \sum_{\substack{(x,z,s,s')\in E^2\times ({\mathbb F}_q^*)^2\\: s\neq s'}} \widehat{S_j}\left((s-s')(-x+z)\right) \chi\left((s-s')(j-x\cdot z)\right).\]
Now observe that $s-s'$ runs through each value in ${\mathbb F}_q^*$ exactly $(q-2)$ times, because $s,s' \neq 0, s\neq s'.$ Therefore we have
\begin{align*} B_j =&q^d(q-2)|E|\sum_{(x,z,s)\in E^2\times {\mathbb F}_q^*} \widehat{S_j}\left(s(-x+z)\right) \chi\left(s(j-x\cdot z)\right)\\
=& q^d(q-2)|E|\sum_{\substack{(x,z,s)\in E^2\times {\mathbb F}_q^*\\
:x=z}} \widehat{S_j}(0,\ldots, 0)\\
 &+ q^d(q-2)|E|\sum_{\substack{(x,z,s)\in E^2\times {\mathbb F}_q^*\\
:x\neq z}} \widehat{S_j}\left(s(-x+z)\right) \chi\left(s(j-x\cdot z)\right) \\
  =&B_{j,1} + B_{j,2}.\end{align*}
 Since $\widehat{S_j}(0,\ldots, 0) \approx q^{-1}$ and $x=z \in E$, we have 
$B_{j,1} \lesssim q^{d+1}|E|^2$ which is dominated by the value in (\ref{ConquerBj}) for $d\geq 2$ as wanted.
To estimate the term $B_{j,2}$, observe that $s(-x+z) \neq (0,\ldots,0),$ because $x\neq z$ and $s\neq 0.$ 
Noting that $\|s(-x+z)\|_2 = (2j-2x\cdot z)s^2$ for $x,z \in E \subset S_j$ 
and using the explicit form for $\widehat{S_j}$ in (\ref{even}), we see that 
\begin{align*} 
B_{j,2} =& K q^{\frac{d-2}{2}}(q-2)|E| \sum_{\substack{(x,z,s,t)\in E^2\times ({\mathbb F}_q^*)^2 \\
: x\neq z}} \chi\left(jt+\frac{(j-x\cdot z)s^2}{2t}\right) \chi\left((j-x\cdot z) s\right) \\
=& K q^{\frac{d-2}{2}}(q-2)|E| \sum_{\substack{(x,z,s,t)\in E^2 \times ({\mathbb F}_q^*)^2 \\: x\neq z, j-x\cdot z=0}} \chi(jt) \\
+& Kq^{\frac{d-2}{2}}(q-2)|E| \sum_{\substack{(x,z,s,t)\in E^2\times ({\mathbb F}_q^*)^2 \\: x\neq z, j-x\cdot z\neq 0}} 
\chi\left(jt+\frac{(j-x\cdot z)s^2}{2t}\right) \chi\left((j-x\cdot z) s\right) \\
 &= B_{j,21} + B_{j,22},\end{align*}
where $K$ is the complex value which is bounded by $1$. Recall that ``$j$" is the radius of the sphere $S_j$, and not zero. 
Therefore, $\sum\limits_{t\in {\mathbb F}_q^*} \chi(jt)$ 
is exactly $-1$ and so the term $B_{j,21}$ is written by the value
\[ B_{j,21} = -Kq^{\frac{d-2}{2}}(q-1)(q-2)|E| \sum_{\substack{(x,z) \in E^2\\: x\neq z, j-x\cdot z =0}} 1.\]
Using Theorem \ref{Best}, we obtain that 
\[B_{j,21} \lesssim q^{\frac{d}{2}}|E|^3+q^d|E|^2.\]
Thus $B_{j,21}$ is also dominated by the value in (\ref{ConquerBj}) as wanted.
To estimate the term $B_{j,22}$, write the term $B_{j,22}$ as follows:
\begin{align*}
 &Kq^{\frac{d-2}{2}}(q-2)|E| \sum_{\substack{(x,z,s,t)\in E^2\times {\mathbb F}_q\times {\mathbb F}_q^* \\: x\neq z, j-x\cdot z\neq 0}} 
\chi\left(jt+\frac{(j-x\cdot z)s^2}{2t}\right) \chi\left((j-x\cdot z) s\right) \\
&-Kq^{\frac{d-2}{2}}(q-2)|E|\sum_{\substack{(x,z,t)\in E^2\times {\mathbb F}_q^* \\: x\neq z, j-x\cdot z\neq 0}} \chi(jt) =B_{j,221}+B_{j,222}.\end{align*}
Then the term $B_{j,222}$ is clearly dominated by $\approx q^{\frac{d}{2}} |E|^3,$ 
because the sum over $t\in {\mathbb F}_q^* $ above is exactly -1. Thus the term  $B_{j,222}$ is clearly dominated by the value in (\ref{ConquerBj}).
To justify  the inequality in (\ref{ConquerBj}), it remains to show that the term $B_{j,221}$ is dominated by the value in (\ref{ConquerBj}).
Let us estimate the term $B_{j,221}.$ After completing the square in $s$-variable and changing of variables, $s+t\rightarrow s,$ we see that
\[B_{j,221}=kq^{\frac{d-2}{2}}(q-2)|E| \sum_{\substack{(x,z,s,t)\in E^2\times {\mathbb F}_q\times {\mathbb F}_q^* \\: x\neq z, j-x\cdot z\neq 0}} 
\chi\left(\frac{(j-x\cdot z)s^2}{2t}\right)\chi\left(\frac{(j+x\cdot z)t}{2}\right),\]
Since $j-x\cdot z\neq 0$ and $t\neq 0$, we can apply the formula in (\ref{Gaussformula}) to get the Gauss sum from the sum over $s$-variable.
As a consequence, we obtain that 
\[B_{j,221}=k^2q^{\frac{d-1}{2}}(q-2)|E| \sum_{\substack{(x,z,t)\in E^2\times {\mathbb F}_q^* \\: x\neq z, j-x\cdot z\neq 0}} 
\eta\left(\frac{j-x\cdot z}{2t}\right) \chi\left(\frac{(j+x\cdot z)t}{2}\right).\]
where $\eta$ is the multiplicative character of $ {\mathbb F}_q^*$ of order two.
Note that $\eta(\frac{1}{t}) =\eta(t)$ for all $t\in {\mathbb F}_q^*$, because the order of the character $\eta$ is two.
Then we see that the sum over $t$-variable above is just one of the twisted Kloosterman sums introduced in (\ref{GKloosterman}).
Thus we obtain that
\[\label{Bj221} B_{j,221} \lesssim q^{\frac{d+2}{2}}|E|^3.\]
This clearly implies that $B_{j,221} $ is dominated by the value in (\ref{ConquerBj}) and so the inequality in (\ref{ConquerBj}) holds.

\subsection{Estimate of the term $C_j$}
In order to complete the proof of Theorem\ref{Long}, we shall estimate the term $C_j$ given by
\[\sum_{\substack{(x,z,z',s,s')\in E^3\times ({\mathbb F}_q^*)^2 \\: z\neq z'}} G(x,z,z',s,s') ,\]
where $G(x,z,z',s,s')$ is defined by the value 
\[q^d|E| \widehat{S_j}\left(s(-x+z)+s'(x-z')\right) 
\chi\left(j(s-s')+ s(-x\cdot z)+ s'(x\cdot z')\right).\]
Recall that we need to prove that the inequality in (\ref{ConquerCj}) holds to complete the proof of Theorem \ref{Long}.
Let us begin by writing the term $C_j$ as two parts 
\begin{align*}
 &\sum_{\substack{(x,z,z',s,s')\in E^3\times ({\mathbb F}_q^*)^2 \\: z\neq z',\\ s(-x+z)+s'(x-z')= (0,\ldots,0)}}
q^d|E|\widehat{S_j}(0,\ldots,0) 
\chi\left(j(s-s')+ s(-x\cdot z)+ s'(x\cdot z')\right)\\
+ &\sum_{\substack{(x,z,z',s,s')\in E^3\times ({\mathbb F}_q^*)^2 \\: z\neq z'\\s(-x+z)+s'(x-z')\neq (0,\ldots,0)}}
G(x,z,z',s,s') =C_{j,1}+C_{j,2}.\end{align*}
We shall first estimate the term $C_{j,1}$. The condition $s(-x+z)+s'(x-z')= (0,\ldots,0)$  clearly implies that
$x\cdot \left(s(-x+z)+s'(x-z')\right)= (0,\ldots,0).$ By the direct calculation of the dot product, we see that 
$j(s-s')+ s(-x\cdot z)+ s'(x\cdot z')=0,$ because $x\in E \subset S_j.$ Using this and  the fact that $\widehat{S_j}(0,\ldots,0) \approx q^{-1},$
we obtain that 
\[C_{j,1} \approx q^{d-1}|E| \sum_{\substack{(x,z,z',s,s')\in E^3\times ({\mathbb F}_q^*)^2 \\: z\neq z',\\ s(-x+z)+s'(x-z')= (0,\ldots,0)}} 1.\]
Combining this with Corollary \ref{Good} , we conclude that
\[C_{j,1} \lesssim q^d |E|^3 + q^{\frac{3d}{2}} |E|^2.\]
Note that the term $ C_{j,1} $ is dominated by the value in (\ref{ConquerCj}). To see this, use the fact that $a+b \geq 2 \sqrt{ab}$
for $a,b \geq 0$ so that the term $ q^d |E|^3 $ is dominated by $\approx q^{\frac{d+2}{2}} |E|^4 + q^{\frac{3d}{2}}|E|^2.$
We shall estimate the term $C_{j,2}.$ Since $s(-x+z)+s'(x-z')\neq (0,\ldots,0)$, using the explicit form for $\widehat{S_j}$ in (\ref{even}) yields
that \[C_{j,2} =K q^{\frac{d-2}{2}}|E|\sum_{\substack{(x,z,z',s,s',t)\in E^3\times ({\mathbb F}_q^*)^3 \\: z\neq z'\\s(-x+z)+s'(x-z')\neq (0,\ldots,0)}}
 \Omega_j(x,z,z',s,s',t),\] where $\Omega_j(x,z,z',s,s',t)$ is given by
\[\chi \left( jt+ \frac{\|s(-x+z)+s'(x-z')\|_2 }{4t}\right)
\chi\left( j(s-s')+s(-x\cdot z) +s'(x\cdot z')\right).\]
To eliminate the condition $s(-x+z)+s'(x-z')\neq (0,\ldots,0)$, we rewrite the term $C_{j,2}$ as following:
\begin{align*}
C_{j,2}=& K q^{\frac{d-2}{2}}|E|\sum_{\substack{(x,z,z',s,s',t)\in E^3\times ({\mathbb F}_q^*)^3 \\: z\neq z'}}
 \Omega_j(x,z,z',s,s',t)\\
&- K q^{\frac{d-2}{2}}|E|\sum_{\substack{(x,z,z',s,s',t)\in E^3\times ({\mathbb F}_q^*)^3 \\: z\neq z'\\s(-x+z)+s'(x-z')= (0,\ldots,0)}}
 \Omega_j(x,z,z',s,s',t)\\
    &= C_{j,21} + C_{j,22}.\end{align*}
Using the arguments for the estimate of $C_{j,1}$ as before, we can easily obtain 
\[ C_{j,22}\lesssim q^{\frac{d}{2}}|E|^3+  q^d|E|^2,\]
where we also used the fact that $ \sum\limits_{t \in {\mathbb F}_q^* } \chi(jt) = -1.$
Thus the term $C_{j,22}$ is clearly dominated by the value in (\ref{ConquerCj}).
Let us estimate the term $ C_{j,21}$. For the simple notation, let $P=j-x\cdot z, Q=j-x\cdot z',$ and $ U=z\cdot z'-j.$ 
Then direct calculation shows that \[j(s-s')+s(-x\cdot z) + s'(x\cdot z')= Ps-Qs',\] 
and 
\[\frac{\|s(-x+z)+s'(x-z')\|_2}{4t} = \frac{Ps^2 - (P+Q+U)ss'+Qs'^2}{2t}.\]
Thus the term $C_{j,21}$ is given by the value
\[K q^{\frac{d-2}{2}}|E| \sum_{\substack{ x,z,z' \in E \\ s,s',t\in {\mathbb F}_q^* \\ : z\neq z'}}
\chi(jt) \chi \left(\frac{Ps^2-(P+Q+U)ss'+2Pts+Qs'^2-2Qts'}{2t}\right).\]
Define $\Gamma_j(x,z,z',s,s',t)$ as the following value:
\[K q^{\frac{d-2}{2}}|E|\chi(jt) \chi \left(\frac{Ps^2-(P+Q+U)ss'+2Pts+Qs'^2-2Qts'}{2t}\right),\]
and write $C_{j,21}$ as four terms:
\begin{align*} C_{j,21}=& \sum_{\substack{ x,z,z' \in E \\ s,s',t\in {\mathbb F}_q^* \\ : z\neq z'\\ P=Q=0}}\Gamma_j(x,z,z',s,s',t) +
\sum_{\substack{ x,z,z' \in E \\ s,s',t\in {\mathbb F}_q^* \\ : z\neq z'\\P\neq 0=Q}}\Gamma_j(x,z,z',s,s',t)\\
+&\sum_{\substack{ x,z,z' \in E \\ s,s',t\in {\mathbb F}_q^* \\ : z\neq z'\\P=0\neq Q}}\Gamma_j(x,z,z',s,s',t)
+\sum_{\substack{ x,z,z' \in E \\ s,s',t\in {\mathbb F}_q^* \\ : z\neq z'\\ P\neq 0\neq Q}}\Gamma_j(x,z,z',s,s',t)\\
=& C_{j,211} +C_{j,212} +C_{j,213} +C_{j,214}.
\end{align*}
We shall estimate the term $C_{j,211}.$ From the condition $P=Q=0$, we see that the value $C_{j,211}$ is given by
\[C_{j,211}= Kq^{\frac{d-2}{2}}|E| \sum_{\substack{ x,z,z' \in E \\ s,s',t\in {\mathbb F}_q^* \\ : z\neq z'\\ P=Q=0}}\chi(jt)  
\chi\left(\frac{-Uss'}{2t}\right).\]
We remark that the values $P,Q,$ and $U$ are independent of the variables $s,s',t \in {\mathbb F}_q^*.$
In case $U=0$, we claim that the contribution to the bound of the term $C_{j,211}$ is given by 
\begin{equation}\label{EU0}q^{\frac{d}{2}}|E|^4+ q^d|E|^3.\end{equation}
To justify the claim, note that the sum over $t\in {\mathbb F}_q^*$ is exactly $-1$ in case $U=0.$ It therefore follows that
\begin{align*} Kq^{\frac{d-2}{2}}|E|\sum_{\substack{ x,z,z' \in E , s,s',t\in {\mathbb F}_q^* \\ : z\neq z', P=Q=U=0}} \chi(jt)
\lesssim&  q^{\frac{d-2}{2}}|E|\sum_{\substack{ x,z,z' \in E , s,s'\in {\mathbb F}_q^* \\ : z\neq z', P=Q=U=0}} 1\\
\lesssim &q^{\frac{d+2}{2}}|E|^2 \sum_{\substack{ x,z \in E \\:x\cdot z =j}} 1\\
\lesssim & q^{\frac{d}{2}}|E|^4+ q^d|E|^3,\end{align*}
where we used the fact that $P=j-x\cdot z=0$ in the second inequality, and Theorem \ref{Best} in the last inequality.
Thus the claim in (\ref{EU0}) is complete.
On the other hand, if $U\neq 0$, the contribution to the bound of the term $C_{j,211}$ is given by the value
\begin{equation}\label{EUN0} q^{\frac{d-2}{2}}|E|^4+ q^{d-1}|E|^3.\end{equation}
To see this, note that if $U\neq 0$ then after changing of variables, $\frac{-Uss'}{2t} \rightarrow s'$, we see that
\begin{align*}
&Kq^{\frac{d-2}{2}}|E| \sum_{\substack{ x,z,z' \in E , s,s',t\in {\mathbb F}_q^* \\ : z\neq z', P=Q=0, U\neq 0}}\chi(jt)  
\chi\left(\frac{-Uss'}{2t}\right)\\ 
\approx& q^{\frac{d}{2}}|E|\sum_{\substack{ x,z,z' \in E \\ : z\neq z', P=Q=0, U\neq 0}}1
\lesssim q^{\frac{d}{2}}|E|^2 \sum_{\substack{ x,z \in E \\:x\cdot z =j}}1\\
\lesssim& q^{\frac{d-2}{2}}|E|^4+ q^{d-1}|E|^3,\end{align*}
where we also used Theorem \ref{Best}.
Combining (\ref{EU0}) with (\ref{EUN0}), we obtain that 
\[ C_{j,211} \lesssim q^{\frac{d}{2}}|E|^4+ q^d|E|^3.\]
Thus the term $C_{j,211} $ is also dominated by the term in (\ref{ConquerCj}), because the term $q^d|E|^3$ is dominated by 
$\approx q^{\frac{d+2}{2}}|E|^4 + q^{\frac{3d}{2}}|E|^2$ using the fact that $a+b\geq 2\sqrt{ab}$ if $a,b\geq0$ as before.
Let us estimate the term $C_{j,212}.$  Since $P\neq 0 $ and $Q= 0$, the term $C_{j,212}$ is given by
\[Kq^{\frac{d-2}{2}}|E|\sum_{\substack{ x,z,z' \in E , s,s',t\in {\mathbb F}_q^* \\ : z\neq z', P\neq 0, Q= 0}}
\chi(jt) \chi \left(\frac{Ps^2-(P+U)ss'+2Pts}{2t}\right),\]
where we recall that $P=j-x\cdot z , Q=j-x\cdot z'$ and $ U=z\cdot z'-j.$
We now write the term $C_{j,212}$ as following:
\begin{align*}
&Kq^{\frac{d-2}{2}}|E|\sum_{\substack{ x,z,z' \in E , s\in {\mathbb F}_q ,s',t\in {\mathbb F}_q^* \\ : z\neq z', P\neq 0, Q= 0}}
\chi(jt) \chi \left(\frac{Ps^2-(P+U)ss'+2Pts}{2t}\right)\\
-&Kq^{\frac{d-2}{2}}|E|\sum_{\substack{ x,z,z' \in E  ,s',t\in {\mathbb F}_q^* \\ : z\neq z', P\neq 0, Q= 0}}\chi(jt) =I +II
\end{align*}
As before, the term $II$ is easily estimated as 
\begin{align*}
II\lesssim & \quad q^{\frac{d}{2}}|E|^2 \sum_{ x,z' \in E : x\cdot z' =j} 1\\
\lesssim & \quad q^{\frac{d-2}{2}}|E|^4+ q^{d-1}|E|^3,\end{align*}
where Theorem \ref{Best} was used in the last line. As before, we therefore see that the term $II$ is also dominated by
the term in (\ref{ConquerCj}).
We shall estimate the term $I.$ Rewrite the term $I$ as following:
\begin{align*} I =&Kq^{\frac{d-2}{2}}|E|
\sum_{\substack{ x,z,z' \in E, s\in {\mathbb F}_q, s',t\in {\mathbb F}_q^* \\ : z\neq z', P\neq 0, Q= 0=P+U}}\chi(jt) \chi\left(\frac{Ps^2 +2Pts}{2t}\right)\\
&+Kq^{\frac{d-2}{2}}|E|
\sum_{\substack{ x,z,z' \in E, s\in {\mathbb F}_q, s',t\in {\mathbb F}_q^* \\ : z\neq z', P\neq 0, Q=0,P+U\neq 0}}
\chi(jt) \chi\left(\frac{Ps^2 -(P+U)ss'+2Pts}{2t}\right)\\
=& I_1 +I_2.\end{align*}
Let us estimate the term $I_1$. Completing the square in $s$-variable and changing of variables, $s+t\rightarrow s$, we see that
\[I_1=Kq^{\frac{d-2}{2}}|E|
\sum_{\substack{ x,z,z' \in E, s\in {\mathbb F}_q, s',t\in {\mathbb F}_q^* \\ : z\neq z', P\neq 0, Q=0= P+U}}
\chi(jt) \chi(\frac{Ps^2}{2t}) \chi(\frac{-Pt}{2}).\]
Since $P\neq 0$ and $t\neq 0$, we can apply the formula in (\ref{Gaussformula}) to obtain the Gauss sum from the exponential sum in $s$-variable.
As a result, we have 
\[I_1 = K^2q^{\frac{d-1}{2}}|E|
\sum_{\substack{ x,z,z' \in E, s',t\in {\mathbb F}_q^* \\ : z\neq z', P\neq 0, Q=0= P+U}}
\eta(\frac{P}{2t}) \chi(jt+\frac{-Pt}{2} ),\]
where we used the fact that the Gauss sum is exactly given by $K q^{\frac{1}{2}}$ for some $K\in \mathbb C.$
Notice that the sum over $t$-variable above is a twisted Kloosterman sum introduced as in (\ref{GKloosterman}).
From this and Theorem \ref{Best}, $I_1$ is easily estimated as
\begin{align*} \label{Koh} I_1 \lesssim& q^{\frac{d+2}{2}}|E|\sum_{\substack{ x,z,z' \in E\\ j-x\cdot z'=0}}1\\
                   \lesssim& q^{\frac{d}{2}}|E|^4+ q^{d}|E|^3,\end{align*} 
which is clearly dominated by the value in (\ref{ConquerCj}).
We shall estimate the term $I_2.$ 
Since $P\neq 0, P+U\neq 0, s\neq 0\neq t$, after changing of variables, $\frac{-(P+U)ss'}{2t}\rightarrow s'$, and changing of variables,
$s+t\rightarrow s,$ we see that 
\[ I_2 =Kq^{\frac{d-2}{2}}|E|
\sum_{\substack{ x,z,z' \in E, s\in {\mathbb F}_q, s',t\in {\mathbb F}_q^* \\ : z\neq z', P\neq 0, Q=0,P+U\neq 0}}
\chi(\frac{Ps^2}{2t}) \chi(s')\chi\left((j-\frac{P}{2})t\right).\]
Observe that the sum over $s' \in {\mathbb F}_q^* $ is exactly $-1,$  and use the formula in (\ref{Gaussformula}) to obtain the Gauss sum.
Then we see that
\[ I_2 =-K^2q^{\frac{d-1}{2}}|E|
\sum_{\substack{ x,z,z' \in E, t\in {\mathbb F}_q^* \\ : z\neq z', P\neq 0, Q=0,P+U\neq 0}}
\eta(\frac{P}{2t})\chi\left((j-\frac{P}{2})t\right).\]
Observe that $ \eta(\frac{1}{t}) =\eta(t)$ for $t\in {\mathbb F}_q^*$, because $\eta$ is multiplicative character of ${\mathbb F}_q^*$ of order two.
Then the sum over $t$-variable above is just a twisted Kloosterman sum and so we obtain that 
\begin{align*}I_2 \lesssim &q^{\frac{d}{2}}|E|\sum_{\substack{ x,z,z' \in E\\:x\cdot z'=j}} 1\\
                  \lesssim& q^{\frac{d-2}{2}}|E|^4+ q^{d-1}|E|^3,\end{align*}
where we used Theorem \ref{Best} in the last inequality. Note that the term $I_2$ is also dominated by the value in (\ref{ConquerCj}).
It remains to estimate the terms $C_{j,213}$ and $C_{j,214}.$To get the upper bound of $ C_{j,213}$, 
modify the processes used to obtain the upper bound of $ C_{j,212}$ after switching the role of $P$ and $Q$.
Then we can show that the term $C_{j,213}$ has the same upper bound as  $C_{j,212}$. Thus we also see that the term $C_{j,213}$ is dominated by
the term in (\ref{ConquerCj}).
Finally, we shall estimate the term $C_{j,214}$. Recall that the term $C_{j,214}$ is given by the value
\[K q^{\frac{d-2}{2}}|E|\sum_{\substack{ x,z,z' \in E \\ s,s',t\in {\mathbb F}_q^* \\ : z\neq z'\\ P\neq 0\neq Q}}
\chi(jt) \chi \left(\frac{Ps^2-(P+Q+U)ss'+2Pts+Qs'^2-2Qts'}{2t}\right).\]
We now write the term $C_{j,214}$ by the two terms
\begin{align*}
&K q^{\frac{d-2}{2}}|E|\sum_{\substack{ x,z,z' \in E \\ s\in{\mathbb F}_q \\s',t\in {\mathbb F}_q^* \\ : z\neq z'\\ P\neq 0\neq Q}}
\chi(jt) \chi \left(\frac{Ps^2-(P+Q+U)ss'+2Pts+Qs'^2-2Qts'}{2t}\right)\\
-&K q^{\frac{d-2}{2}}|E|\sum_{\substack{ x,z,z' \in E \\ s',t\in {\mathbb F}_q^* \\ : z\neq z'\\ P\neq 0\neq Q}}
\chi(jt) \chi \left(\frac{Qs'^2-2Qts'}{2t}\right) =G+H.\end{align*}
Let us estimate the term $G$. Letting $T = P+Q+U$ and completing the square over $s$-variable, we note that
\begin{align*} &Ps^2 -(P+Q+U)ss'+2Pts+Qs'^2-2Qts'\\
=&P\left(s+\frac{-Ts'+2tP}{2P} \right)^2 + \frac{(4PQ-T^2)s'^2 +(-8PQt+4PTt)s'-4P^2t^2}{4P}.\end{align*}
After changing of variables, $s+\frac{-Ts'+2tP}{2P}\rightarrow s,$ and using the formula in (\ref{Gaussformula}), we have $G =K^2q^{\frac{d-1}{2}}|E|$
\[\times\sum_{\substack{ x,z,z' \in E ,s',t\in {\mathbb F}_q^* \\ : z\neq z'\\ P\neq 0\neq Q}} \eta(\frac{P}{2t})\chi\left(\frac{(-2Q+T)s'}{2}\right)
\chi\left((j-\frac{P}{2})t + \frac{(4PQ-T^2)s'^2}{8Pt}\right).\]
Observe that the sum over $t$-variable is a twisted Kloosterman sum which is bounded by $\approx q^\frac{1}{2}.$ Thus, we obtain that
\begin{equation}\label{EG} G\lesssim q^{\frac{d+2}{2}}|E|^4.\end{equation}
To get the upper bound of the term $H$, we use the trivial estimate so that we can easily obtain that
\[ H \lesssim q^{\frac{d+2}{2}}|E|^4.\]
From this estimate and (\ref{EG}), we conclude that 
 \[C_{j,214} \lesssim q^{\frac{d+2}{2}}|E|^4.\]
Thus the term $C_{j,214} $ is dominated by the value in (\ref{ConquerCj}) and the proof is complete.

\end{proof}

  \end{document}